\newtheorem{thm}{Theorem}[section]
\newtheorem{lem}[thm]{Lemma}
\newtheorem{pro}[thm]{Proposition}
\theoremstyle{definition}
\newtheorem{rem}[thm]{Remark}
\newtheorem{ex}[thm]{Example}
\theoremstyle{plain}
\newtheorem{kor}[thm]{Corollary}
\newtheorem{defi}[thm]{Definition}
\newtheorem{que}[thm]{Question}
\newcommand{\filt}[6]{
\[
\begin{xy}
\xymatrix@R20pt@C20pt{
&\mathbb{C}^3&\\\langle #1,#2 \rangle\ar[ru]&\langle #3,#4\rangle\ar[u]&\langle #5,#6\rangle\ar[lu]\\\langle #1\rangle\ar[u]&\langle #3\rangle\ar[u]&\langle #5\rangle\ar[u]}
\end{xy}
\]
}
\newcommand{\quiverext}[6]{\[
\begin{xy}
\xymatrix@R20pt@C20pt{
(#1)&&(#2)\ar[ld]^{#5}\ar[ll]^{#4}\\&(#3)\ar[lu]^{#6}}
\end{xy}
\]
}
\newcommand{\quiverhom}[6]{\[
\begin{xy}
\xymatrix@R20pt@C20pt{
(#1)&&(#2)\ar@{--}[ld]^{#5}\ar[ll]^{#4}\\&(#3)\ar[lu]^{#6}}
\end{xy}
\]
}
\newcommand\Zn{\mathbb{Z}}
\newcommand\Nn{\mathbb{N}}
\newcommand{\Sc}[2]{\langle #1,#2\rangle}
\newcommand{\Hom}{\mathrm{Hom}} 
\newcommand{\Ext}{\mathrm{Ext}} 
\newcommand{\ext}{\mathrm{ext}} 
\newcommand{\End}{\mathrm{End}} 
\newcommand{\Rep}{\mathrm{Rep}} 
\newcommand{\ses}[3]{0\rightarrow #1\rightarrow #2\rightarrow#3\rightarrow 0}
\newcommand{\sesv}[5]{0\rightarrow #1\xrightarrow{#4} #2\xrightarrow{#5}#3\rightarrow 0}
\DeclareSymbolFont{symbolsC}{U}{pxsyc}{m}{n}
\DeclareMathSymbol{\Perp}{\mathrel}{symbolsC}{121}
\begin{document}
\title{On the recursive construction of indecomposable quiver representations}
\begin{abstract}
For a fixed root of a quiver, it is a very hard problem to construct all or even only one indecomposable representation with this root as dimension vector. We investigate two methods which can be used for this purpose. In both cases we get an embedding of the category of representations of a new quiver
into the category of representations of the original one which increases dimension vectors. Thus it can be used to construct indecomposable representations of the original quiver recursively. Actually, it turns out that there is a huge class of representations which can be constructed using these methods.
\end{abstract}
\subjclass[2010]{16G20}
\author{Thorsten Weist}
\address{Thorsten Weist\\Fachbereich C - Mathematik\\
Bergische Universität Wuppertal\\
D - 42097 Wuppertal, Germany}
\email{weist@uni-wuppertal.de}
\maketitle

\section{Introduction}
\noindent For a fixed quiver, Kac proved in \cite{kac} that the set of dimension vectors corresponding to the set of indecomposable representations coincides with the root system of the underlying graph of the quiver. Thus since the root system is independent of the orientation of the arrows, the question of the existence of indecomposables with this root as dimension vector is also independent. Actually, he also proved that there exists a certain parameter family of indecomposables where the number of parameters depends on the Euler form of the quiver. For a fixed root this raises the question which of the representations in the variety of representations are indecomposable. In general, this is very hard to decide. But in many cases it is possible to construct indecomposable representations recursively. Mostly the idea is to fix a set of indecomposables of smaller dimensions and to glue them appropriately in order to get indecomposable representations of greater dimension. In most cases the new indecomposables can be seen as the middle terms of certain exact sequences. Examples for this are Ringel's reflection functor, see \cite[Section 1]{rin}, and Schofield induction, see \cite[Section 2]{sch}. Also the methods of Peternell presented in \cite[Section 4]{pet} and of the author in \cite[Section 3]{wei} state a recursive construction. Even if it is rarely possible to construct all indecomposable representations of a fixed dimension with these methods, a first step would be to construct an indecomposable tree module for every root. Its existence is conjectured in \cite{rin3} and proved in several cases. In fact, all of the mentioned constructions can be used to construct tree modules. To do so we have to assure that the smaller indecomposables are tree modules and that we choose tree-shaped bases of the groups of extensions between the respective representations. The existence of indecomposable tree modules is known for exceptional roots, see \cite{rin1}, for imaginary Schur roots, see \cite{wei}, and for a lot of cases which are mostly obtained by the above mentioned constructions, see \cite{wie} as an example. 

The starting point of our investigations is to fix a sequence of representations of a fixed quiver. Then we can consider the quiver where the vertices are in one-to-one correspondence with the representations and where the number of arrows is the dimension of the respective groups of extensions. This gives a faithful functor from the representation category of the new quiver to the representation category of the original one. In general, it is not clear which additional properties this functor has. In most cases, it does not preserve morphism spaces or even indecomposability. Since Schofield induction and also parts of Ringel's reflection functor can be realised using this functor, this leads to the following question:
\begin{que}
Which conditions do such sequences of quiver representations have to satisfy if the functor is meant to preserve indecomposability?
\end{que}
Dealing with this question is one of the main issues of this paper. We consider two cases of conditions on such sequences which make sure that the functor is either a fully faithful embedding or at least preserves indecomposability. 

The first incident of sequences fills the gap to quivers with oriented cycles which are not covered by \cite{sch}. More detailed, we fix a sequence of (possibly exceptional) Schur representations such that there are pairwise no homomorphisms between them. In particular, we allow extensions in both directions. Thus, in comparison to the exceptional sequences appearing in Schofield induction and which  are a special case of the sequences under consideration, the induced quiver may have oriented cycles. Thus it gives a huge number of examples which are not covered by it. Moreover, we will see that even in the cases which are also covered by other functors or methods, it is often easier to control the recursive construction in this way.

The second condition, which we investigate, leads to a generalization of some of the functors considered by Ringel in \cite{rin}. In there, initially, an exceptional representation is fixed. Then different categories as the induced perpendicular categories are considered. For instance, for a fixed indecomposable representation which has neither homomorphisms nor extensions to the exceptional representation this functor can be used to construct new indecomposable representations by gluing the exceptional representation a prescribed number of times. This can be realized by a functor from the category of representations of the quiver which has two vertices and $d$ arrows in one direction to the category of representations of the original quiver. Here $d$ denotes the dimension of the extension space. We generalize this to the effect that we allow certain exceptional sequences of representations with pairwise vanishing homomorphism spaces instead of only one exceptional representation.
 This again gives a huge number of new examples.

A natural and very difficult question is for which roots at least one of the constructions mentioned in the introduction applies or to be precise:
\begin{que}\label{1}
For which non-Schurian roots of a fixed quiver $Q$ there exists at least one indecomposable representation which can be constructed using the main results of this paper, i.e. Theorems \ref{thm1} and \ref{indecompconstr} (which include Ringel's reflection functor and Schofield induction)?
\end{que}
Actually it seems that there is a positive answer for the ''large majority'' of roots. Even a root for which there is no indecomposable which can be constructed using the main results of the paper is not known to me. Here we can restrict to non-Schurian roots because Schurian roots are covered by \cite{wei}. The methods presented there can also be seen as a recursive application of the functor under consideration. We should mention that it is not straightforward to compare the results of Peternell with the preceding results because the resulting quivers have different properties, see also the comment in the beginning of Section 3.1.

Actually, Question \ref{1}, which is also considered in the fourth section, is closely related to the still open problem asking for the existence of indecomposable tree modules. Indeed, if there were a positive answer for every root, we could get a recursion started proving that for every root there exists an indecomposable tree module.

In the last section we deal with the natural question whether it is possible to take self-extensions of non-exceptional representations into account when considering the described functor. This should translate to adding loops to a vertex which corresponds to a Schur representation with self-extensions. Actually, there is a natural way of generalization, but we can state a counterexample which shows that, in general, this functor does not preserve indecomposability even in the case of a quiver with only one vertex and loops. 
\vspace{0.3cm}

\noindent {\bf Acknowledgements:} I would like to thank K. Bongartz for valuable discussions on this topic.
Moreover, I would like to thank R. Kinser, M. Reineke and C.M. Ringel for some helpful comments.

\section{Recollection and Notation}\label{allg}
\subsection{Representations of quivers}\label{repsec}
\noindent Let $k$ be an algebraically closed field. Let $Q=(Q_0,Q_1)$ be a quiver with vertices $Q_0$ and arrows $Q_1$ denoted by $\rho:q\rightarrow q'$ for $q,q'\in Q_0$. A vertex $q\in Q_0$ is called sink if there does not exist an arrow $\rho:q\rightarrow q'\in Q_1$. A vertex $q\in Q_0$ is called source if there does not exist an arrow $\rho:q'\rightarrow q\in Q_1$. 
Except for Section \ref{5} we only consider quivers without loops, i.e. arrows $\rho:q\to q$.
 
Define the abelian group
\[\mathbb{Z}Q_0=\bigoplus_{q\in Q_0}\mathbb{Z}q\] and its monoid of dimension vectors $\mathbb{N}Q_0$.

A finite-dimensional $k$-representation of $Q$ is given by a tuple
\[X=((X_q)_{q\in Q_0},(X_{\rho}:X_q\rightarrow X_{q'})_{\rho:q\to q'\in Q_1})\]
of finite-dimensional $k$-vector spaces and $k$-linear maps between them. The dimension vector $\underline{\dim}X\in\mathbb{N}Q_0$ of $X$ is defined by
$\underline{\dim}X=\sum_{q\in Q_0}\dim_kX_qq$. In the following, we mostly write $\dim$ instead of $\dim_k$. We denote by $\mathrm{Rep}(Q)$ the category of finite-dimensional representations of $Q$. By $S_q$ we denote the simple representation corresponding to the vertex $q$.

Let $\alpha\in\mathbb{N}Q_0$ be a dimension vector. The variety $R_\alpha(Q)$ of $k$-representations of $Q$ of dimension vector
$\alpha$ is defined as the affine $k$-space
\[R_{\alpha}(Q)=\bigoplus_{\rho:q\rightarrow q'\in Q_1} \mathrm{Hom}_k(k^{\alpha_q},k^{\alpha_{q'}}).\]

On $\Zn Q_0$ we have a non-symmetric bilinear form, the Euler form,
which is defined by
\[\Sc{\alpha}{\beta}=\sum_{q\in Q_0}\alpha_q\beta_q-\sum_{\rho:q\rightarrow q'\in Q_1}\alpha_q\beta_{q'}\]
for $\alpha,\,\beta\in\Zn Q_0$.

For more details concerning the connection between roots and indecomposable representations we refer to \cite[Section 2]{kac}. We only give a short summary. A dimension vector is called a root if there exists an indecomposable representation of this dimension. It is called Schur root if there exists a representation with trivial endomorphism ring. By $W(Q)$ we denote the Weyl group of the quiver $Q$. A root $\alpha\in\Nn Q_0$ is called real if we have $\alpha\in W(Q)Q_0$, i.e. $\alpha$ arises by reflecting a simple root. All the other roots are called imaginary. Recall that a root is real if and only if $\Sc{\alpha}{\alpha}=1$ and imaginary if and only if $\Sc{\alpha}{\alpha}\leq 0$. In the following we do not always distinguish between a real root and the unique indecomposable representation of this dimension. If $\alpha$ is a real root of a quiver, we will denote the unique indecomposable representation of this dimension by $M_{\alpha}$. 

Let $(\alpha,\beta):=\Sc{\alpha}{\beta}+\Sc{\beta}{\alpha}$ be the symmetrized Euler form. The fundamental domain $F(Q)$ of $\mathbb{N}Q_0$ is given by the dimension vectors $\alpha$ with connected support such that $(\alpha,q)\leq 0$ for all $q\in Q_0$. Moreover, we have $\alpha\in W(Q)F(Q)$ for all imaginary roots $\alpha$.

For the remaining part of this section we assume that $Q$ has no oriented cycles.
Let $X$ and $Y$ be two representations of a quiver $Q$. We consider the linear map
\[d_{X,Y}:\bigoplus_{q\in Q_0}\Hom_k(X_q,Y_q)\rightarrow\bigoplus_{\rho:q\rightarrow q'\in Q_1}\Hom_k(X_q,Y_{q'})\]
defined by $d_{X,Y}((f_q)_{q\in Q_0})=(Y_{\rho}f_q-f_{q'}X_{\rho})_{\rho:q\rightarrow q'\in Q_1}$.
Then we have $\ker(d_{X,Y})=\Hom_Q(X,Y)$ and $\mathrm{coker}(d_{X,Y})=\Ext_Q(X,Y)$, see \cite[Section 2.1]{rin2}. In the following, we mostly drop the subscript $Q$.

Recall that every morphism $g\in\bigoplus_{\rho:q\rightarrow q'\in Q_1}\Hom_k(X_q,Y_{q'})$ defines an exact sequence $E(g)\in\Ext(X,Y)$ by
\[\ses{Y}{((Y_q\oplus X_q)_{q\in Q_0},(\begin{pmatrix}Y_{\rho}&g_{\rho}\\0&X_{\rho}\end{pmatrix})_{\rho\in Q_1})}{X}\]
with the canonical inclusion on the left hand side and the canonical projection on the right hand side. Then it is straightforward to check that two sequences $E(g)$ and $E(h)$ are equivalent if and only if $g-h\in\mathrm{Im}(d_{X,Y})$.

Moreover, by \cite[Section 2.2]{rin2}, for two representations $X$, $Y$ of $Q$ we have
\[\Sc{\underline{\dim} X}{\underline{\dim} Y}=\dim\Hom(X,Y)-\dim\Ext(X,Y)\]
and $\Ext^i(X,Y)=0$ for $i\geq 2$. 

If some property is independent of the point chosen in some open subset $U$ of $R_{\alpha}(Q)$, following \cite{sch}, we say that this property is true for a general representation of dimension vector $\alpha\in\Nn Q_0$.

Since the function
$\lambda:R_{\alpha}(Q)\times R_{\beta}(Q)\rightarrow\mathbb{N},\,(X,Y)\mapsto\dim \Hom(X,Y)$, is upper semi-continuous, see for instance \cite[Section 1]{sch}, we can define $\hom(\alpha,\beta)$ to be the minimal, and therefore general, value of this function. In particular, we get that if $\alpha$ is a Schur root of a quiver, then a general representation is Schurian. Moreover, we define $\ext(\alpha,\beta):=\hom(\alpha,\beta)-\Sc{\alpha}{\beta}$.

\subsection{A functor between categories of representations of quivers}\label{functorsec}
The main focus of this paper is on a functor between two categories of quiver representations. This functor can be defined for every fixed sequence of representations of a fixed quiver $Q$. But, in general, this functor does not have any nice properties as it does not preserve indecomposability or homomorphism spaces. Thus the main aim is to investigate under which extra conditions the functor preserves indecomposability. 

To define the functor we fix a sequence $M=(M_1,\ldots,M_r)$ consisting of representations of a quiver $Q$. The next step is to consider the quiver $Q(M)$ which has vertices $Q(M)_0=\{m_1,\ldots,m_r\}$. Moreover, the quiver has $n_{ij}:=\dim\Ext(M_i,M_j)$ arrows from $m_i$ to $m_j$ if $i\neq j$ and no loops. For each pair $i,j$ with $i\neq j$ we also fix $$\mathcal{B}_{ij}=\{\chi^{ij}_1,\ldots,\chi^{ij}_{n_{ij}}\}\subseteq\bigoplus_{\rho:q\rightarrow q'\in Q_1}\Hom_k((M_i)_q,(M_j)_{q'})$$ such that the corresponding residue classes are a basis of $\Ext(M_i,M_j)$. Since the arrows of $Q(M)$ are in correspondence with these basis elements we denote the arrows of $Q(M)$ by $\chi_{l}^{ij}$. Finally, for a representation $X$ of $Q(M)$ define $\tilde X_{i,q}:=(M_i)_q\otimes_k X_{m_i}$ where $q\in Q_0$ and $i\in\{1,\ldots,r\}$. 

This gives rise to a functor $F_M:\Rep(Q(M))\rightarrow \Rep(Q)$: we define a representation $F_MX$ of $Q$ by the vector spaces
\[(F_MX)_q=\bigoplus_{i=1}^r \tilde X_{i,q}\text{ for all }q\in Q_0\]
and for $\rho:q\rightarrow q'$ we define linear maps $(F_MX)_{\rho}=\bigoplus_{i=1}^r \tilde X_{i,q}\to\bigoplus_{i=1}^r \tilde X_{i,q'}$ by
\[((F_MX)_{\rho})_{i,i}=(M_i)_{\rho}\otimes_k\mathrm{id}_{X_{m_i}}:\tilde X_{i,q}\rightarrow \tilde X_{i,q'}\]
and 
\[((F_MX)_{\rho})_{i,j}=\sum_{l=1}^{n_{ji}}(\chi^{ji}_l)_{\rho}\otimes_kX_{\chi^{ji}_l}:\tilde X_{j,q}\rightarrow \tilde X_{i,q'}\]
for $i\neq j$. 

Let $f=(f_{m_i})_{i=1,\ldots,r}:X\rightarrow X'$ be a morphism. Then we define $F_Mf:F_MX\rightarrow F_MX'$ by
\[((F_Mf)_q)_{i,j}=\left\{\begin{matrix}\mathrm{id}_{(M_j)_q}\otimes_kf_{m_j}:\tilde X_{j,q}\rightarrow \tilde X'_{i,q}\text{ if }i=j\\0:\tilde X_{j,q}\rightarrow \tilde X'_{i,q}\text{ if }i\neq j\end{matrix}\right..\]

In abuse of notation we will often skip the $M$ in $F_M$. Note that $F$ indeed defines a functor because for a morphism $f:X\rightarrow X'$ we have that 
\begin{eqnarray*}
((FX')_{\rho}\circ (Ff)_q)_{i,j}&=&\sum_{l=1}^r((FX')_{\rho})_{i,l}\circ((Ff)_q)_{l,j}=((FX')_{\rho})_{i,j}\circ((Ff)_q)_{j,j}\\
&=&\left(\sum_{l=1}^{n_{ji}}(\chi^{ji}_l)_{\rho}\otimes_kX'_{\chi^{ji}_l}\right)\circ\mathrm{id}_{(M_j)_q}\otimes_kf_{m_j}\\
&=&\mathrm{id}_{(M_i)_{q'}}\otimes_kf_{m_i}\circ\left(\sum_{l=1}^{n_{ji}}(\chi^{ji}_l)_{\rho}\otimes_kX_{\chi^{ji}_l}\right)\\
&=&((Ff)_{q'})\circ (FX)_{\rho})_{i,j}.
\end{eqnarray*} 
Thus it follows that $(FX')_{\rho}\circ (Ff)_q=(Ff)_{q'}\circ (FX)_{\rho}$ for all $\rho:q\rightarrow q'$.

\begin{rem}\label{bem}
Note that it is straightforward to check that the functor $F$ is always faithful.
Moreover, the definition of the functor can be summarized as follows: every vertex of $Q(M)$ corresponds to a representation of $M$. Moreover, every arrow of $Q(M)$ corresponds to a basis element of the group of extensions of the representations corresponding to the tail and the head of the arrow. Taking one copy of $M_i$ and $M_j$ a basis element of $\Ext(M_i,M_j)$ can be thought of as a way of gluing $M_i$ to $M_j$. In order to obtain the representation $FX$ for a fixed representation $X$ of dimension $\alpha$ we first take $\alpha_i$ copies of $M_i$ for every $i\in\{1,\ldots,r\}$. Finally, the linear maps of $X$ describe how to glue the copies of $M_i$ and $M_j$.
\end{rem}
\begin{rem} Note that, if all representations $M_i$ are tree modules, the Ext-bases are tree shaped and the representation $X$ is also a tree module, this gluing can be made very explicit as we will see in many examples throughout the paper. Roughly speaking, in this case gluing translates to drawing extra arrows between the corresponding coefficient quivers of the different tree modules.
We review the definitions of tree modules and tree-shaped bases respectively in Section \ref{tree}.
\end{rem}

If $X$ is a representation of $Q(M)$, by $S_iX$ we denote the representation defined by \[(S_iX)_{m_j}=\left\{\begin{matrix}X_{m_i} \text{ if } j=i\\0\text{ otherwise}\end{matrix}\right..\]
Note that $FS_iX=M_i^{\dim X_{m_i}}$. Therefore, for $i\neq j$ there exist isomorphisms
\[\Theta^{ji}_{X,X'}:\bigoplus_{l=1}^{n_{ji}}\Hom_k(S_jX,S_iX')\cong\Ext(M_j^{\dim X_{m_j}},M_i^{\dim X'_{m_i}})\]
induced by $(\phi_l)_{l}\mapsto (\sum_{l=1}^{n_{ji}}(\chi^{ji}_l)_{\rho}\otimes_k\phi_l)_{\rho\in Q_1}=F((\phi_l)_l)$. It is straightforward to check that $\Theta^{ji}$ is functorial in $X$ and $X'$. This means that, for a morphism $f=(f_i)_{i=1,\ldots,r}:X'\to Y$ with $X,\,Y,\,X'\in \Rep(Q(M))$, we have $\Theta_{X,X'}^{ji}(f_i\circ(\phi_l)_l)=F(f_i\circ(\phi_l)_l)= F(f)_i\circ F((\phi_l)_l)$. We obtain the analogue statement for morphisms $f:X\to Y$.

\subsection{Exceptional sequences and perpendicular categories}\label{experp}
An indecomposable representation $X$ of a quiver $Q$ is called exceptional if $\Ext(X,X)=0$. Then it follows that $\underline{\dim} X$ is a real Schur root and $\mathrm{End}(X)=k$, see \cite[Lemma 4.1]{hr}. A sequence $E=(E_1,\ldots,E_r)$ of representations of $Q$ is called exceptional if every $E_i$ is exceptional and, moreover, $\Hom(E_i,E_j)=\Ext(E_i,E_j)=0$ if $i<j$. If, in addition, $\Hom(E_j,E_i)=0$ if $i<j$, we call such a sequence reduced. For two roots $\alpha$ and $\beta$ we denote by $\beta\in\alpha^{\perp}$ if $\hom(\alpha,\beta)=\ext(\alpha,\beta)=0$. In this way we can also refer to exceptional sequences of roots.

For a set $M=\{M_1,\ldots,M_r\}$ of representations of $Q$ we define its perpendicular categories
\[^{\perp}M=\{X\in\mathrm{Rep}(Q)\mid \Hom(X,M_j)=\Ext(X,M_j)=0\text{ for }j=1,\ldots,r\},\] 
\[M^{\perp}=\{X\in\mathrm{Rep}(Q)\mid \Hom(M_j,X)=\Ext(M_j,X)=0\text{ for }j=1,\ldots,r\}.\]

It is straightforward to check that these categories are closed under direct sums, direct summands, extensions, images, kernels and cokernels. From \cite[Theorems 2.3 and 2.4]{sc2} it follows:
\begin{thm}\label{perpcat}Let $Q$ be a quiver with $n$ vertices and $E=(E_1,\ldots,E_r)$ be an exceptional sequence. 
\begin{enumerate}
\item The categories $^{\perp}E$ and $E^{\perp}$ are equivalent to the categories of representations of quivers $Q(^{\perp}E)$ and $Q(E^{\perp})$ respectively such that these quivers have $n-r$ vertices and no oriented cycles.
\item There is an isometry with respect to the Euler form between the dimension vectors of  $Q(^{\perp}E)$ (resp. $Q(E^{\perp})$) and the dimension vectors of $^{\perp}E$ (resp. $E^{\perp}$) given by
$\Phi((d_1,\ldots,d_{n-r}))=\sum_{i=1}^{n-r}d_i\alpha_i$ where $\alpha_1,\ldots,\alpha_{n-r}$ are the dimension vectors of the simple representations of the perpendicular categories.
\end{enumerate}
\end{thm}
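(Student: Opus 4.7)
The plan is to prove the theorem by induction on the length $r$ of the exceptional sequence $E=(E_1,\ldots,E_r)$, with the essential content concentrated in the base case $r=1$; the case $r=0$ is trivial, and the statement for $E^{\perp}$ is dual to that for $^{\perp}E$. For $r=1$ with $E=(E_1)$, since $^{\perp}E_1$ is already known to be closed under kernels, cokernels, images, extensions and direct summands, it is an exact abelian subcategory of $\Rep(Q)$. The strategy is to exhibit its simple objects directly: for each simple $S_q$ of $Q$, I would construct a canonical $T_q\in {}^{\perp}E_1$ by forming universal extensions with appropriate powers of $E_1$ so as to kill the offending $\Hom$ and $\Ext$ contributions from $S_q$ to $E_1$; exactly one such $T_q$ becomes zero (corresponding to the vertex absorbed by the support of $E_1$), leaving $n-1$ nontrivial simples. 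Declaring these to be the vertices of $Q(^{\perp}E_1)$ and taking $\dim\Ext(T_q,T_{q'})$ arrows from $T_q$ to $T_{q'}$, the equivalence $\Rep(Q(^{\perp}E_1))\simeq {}^{\perp}E_1$ would then follow from showing that every object of $^{\perp}E_1$ admits an iterated extension filtration by the $T_q$'s, while acyclicity of $Q(^{\perp}E_1)$ would be established via a stability or slope condition compatible with the exceptional structure, or by direct analysis of how the universal extensions interact with the acyclic orientation of $Q$.

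For the inductive step $r\geq 2$, I would write $^{\perp}E = {}^{\perp}E_1\cap{}^{\perp}\{E_2,\ldots,E_r\}$ and transport the objects $E_2,\ldots,E_r$ across the equivalence $^{\perp}E_1\simeq\Rep(Q(^{\perp}E_1))$ to obtain representations $\overline{E_2},\ldots,\overline{E_r}$ of $Q(^{\perp}E_1)$. Because the equivalence preserves $\Hom$ and $\Ext$, the sequence $(\overline{E_2},\ldots,\overline{E_r})$ remains exceptional, and applying the induction hypothesis to $Q(^{\perp}E_1)$ (which has $n-1$ vertices and no oriented cycles) yields the desired equivalence $^{\perp}E\simeq\Rep(Q')$ with $Q'$ having $(n-1)-(r-1)=n-r$ vertices and no oriented cycles. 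For part (2), the isometry $\Phi$ sends each simple of $Q(^{\perp}E)$ to the dimension vector of its image in $\Rep(Q)$ and extends linearly; it preserves the Euler form because $^{\perp}E$ is extension-closed in $\Rep(Q)$ so that $\Hom$ and $\Ext$ computed in the subcategory agree with those in $\Rep(Q)$, and the identity $\Sc{\underline{\dim}X}{\underline{\dim}Y}=\dim\Hom(X,Y)-\dim\Ext(X,Y)$ on both sides then forces the isometry.

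The main obstacle is the base case, and within it the acyclicity of $Q(^{\perp}E_1)$: this does not follow formally from the acyclicity of $Q$ but requires leveraging the Schurian, exceptional nature of $E_1$ together with careful control of how the universal extensions respect the partial order on the vertices of $Q$. Constructing the $T_q$ and verifying that morphisms and extensions in $^{\perp}E_1$ are faithfully captured by arrows of $Q(^{\perp}E_1)$ also demands a genuine argument rather than an abstract-nonsense one. Once the base case is settled, the inductive step and the isometry statement become essentially formal consequences of functoriality, extension-closure, and the vanishing $\Ext^i=0$ for $i\geq 2$ recalled in Section~\ref{repsec}.
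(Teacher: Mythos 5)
The paper does not prove this theorem; it imports it directly with the sentence ``From \cite[Theorems 2.3 and 2.4]{sc2} it follows,'' i.e.\ from Schofield's \emph{Semi-invariants of quivers}. There is therefore no internal proof to compare against, only a citation, and your proposal is an attempt to reconstruct the result from scratch. The induction on $r$ and the reduction of the inductive step to the $r=1$ case via transporting $(E_2,\ldots,E_r)$ across the equivalence is sound and is indeed how one normally reduces; likewise your argument for part (2) from extension-closure plus the identity $\Sc{\underline{\dim}X}{\underline{\dim}Y}=\dim\Hom(X,Y)-\dim\Ext(X,Y)$ is correct once the equivalence in part (1) is established.

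The real gap is exactly where you flag it, the base case, and it is larger than a technicality. Forming a universal extension $0\to E_1^d\to X'\to X\to 0$ kills $\Ext(X,E_1)$ but leaves $\Hom(X',E_1)\cong\Hom(X,E_1)$ untouched; to kill $\Hom$ you must pass to the kernel of the canonical map $X\to E_1^{\hom(X,E_1)}$, and that kernel may reacquire $\Ext$'s to $E_1$, so the naive two-step modification of $S_q$ need not terminate in an object of $^{\perp}E_1$, let alone a simple one. Moreover, the $n-1$ simples of $^{\perp}E_1$ are not in natural bijection with a subset of the $S_q$: the rank count comes from the Grothendieck group being the orthogonal complement of $\underline{\dim}E_1$, not from deleting one vertex. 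The standard route (Geigle--Lenzing, or Schofield's own) instead constructs a projective generator (equivalently, a tilting object) of the perpendicular category and identifies $^{\perp}E_1$ with the module category of its endomorphism ring; heredity and acyclicity then come out of tilting theory, not from a filtration-by-simples argument. Your sketch is a genuinely different and more elementary-looking route, but as written it does not yet produce the simples, does not justify that exactly one candidate vanishes, does not establish the filtration, and (as you concede) does not address acyclicity; each of these is a substantive missing step rather than a detail to be filled in.
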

For an exceptional sequence $E=(E_1,\ldots,E_r)$ with $\underline{\dim} E_i=\alpha_i$ we define $\mathcal{C}(E_1,\ldots,E_r)$ as the full subcategory of $\Rep(Q)$ which contains $E_1,\ldots,E_r$ and which is closed under extensions, kernels of epimorphisms and cokernels of monomorphisms. Suppose that $E=(E_1,\ldots,E_r)$ is a reduced exceptional sequence. Then by \cite[Lemma 2.35]{dw2} we have that $E_1,\ldots,E_r$ are the simple objects of $\mathcal{C}(E_1,\ldots,E_r)$. Moreover, by Theorem \ref{perpcat}, it follows that the category $\mathcal{C}(E_1,\ldots,E_r)$ is equivalent to the category of representations of the quiver $Q(E)$ defined in Section 
\ref{functorsec}.
Thus an immediate consequence of Theorem \ref{perpcat} is the following, see also \cite[Section 2]{sch} and \cite[Theorem 2.38]{dw}:
\begin{kor}\label{root}
Let $E=(E_1,\ldots,E_r)$ with $\underline{\dim} E_i=\alpha_i$ be a reduced exceptional sequence. Then $\alpha=\sum_{i=1}^rd_i\alpha_i$ is a root of $Q$ if and only if $(d_1,\ldots,d_r)$ is a root of $Q(E)$.
\end{kor}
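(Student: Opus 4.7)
I would derive the corollary directly from the equivalence $\mathcal{C}(E_1,\ldots,E_r)\simeq\Rep(Q(E))$ recalled just above, invoking Kac's theorem for the nontrivial direction.

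For the $(\Leftarrow)$ direction, assume $(d_1,\ldots,d_r)$ is a root of $Q(E)$ and pick an indecomposable $X\in\Rep(Q(E))$ of that dimension. Under the equivalence, the simple objects of $\Rep(Q(E))$ correspond to $E_1,\ldots,E_r$, whose dimension vectors in $Q$ are $\alpha_1,\ldots,\alpha_r$. Since the equivalence is exact and additive, a Jordan--H\"older argument shows that the image of $X$ in $\Rep(Q)$ has dimension vector $\sum_i d_i\alpha_i=\alpha$, and this image is indecomposable because equivalences preserve indecomposability. Hence $\alpha$ is a root of $Q$.

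For the $(\Rightarrow)$ direction, I would invoke the isometry $\Phi:(d_1,\ldots,d_r)\mapsto\sum_i d_i\alpha_i$ of Euler forms furnished by Theorem \ref{perpcat}(2) (in its variant for $\mathcal{C}(E)$, which follows from the same argument because $E_1,\ldots,E_r$ are the simples of $\mathcal{C}(E)$). By Kac's theorem the root system of any quiver is determined purely by its symmetrized Euler form: the real roots are the Weyl orbits of the simple roots, and the imaginary roots are the Weyl orbits of the fundamental domain, which is itself cut out by Tits-form inequalities. Since $\Phi$ is an isometry sending the simple root at $m_i$ to the real root $\alpha_i$ of $Q$, it intertwines the reflection $s_{m_i}\in W(Q(E))$ with $s_{\alpha_i}\in W(Q)$, and consequently carries $F(Q(E))$ into $F(Q)$. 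Hence $\Phi$ identifies the roots of $Q(E)$ with the roots of $Q$ lying in its image, giving the converse.

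The delicate point is the Weyl-group compatibility $\Phi\circ s_{m_i}=s_{\alpha_i}\circ\Phi$, but this is an immediate consequence of the isometry property together with the fact that each $\alpha_i$ is a real root (so $s_{\alpha_i}$ is well-defined in $W(Q)$), combined with the explicit formula for reflections in terms of the symmetrized Euler form.
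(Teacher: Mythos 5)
Your $(\Leftarrow)$ direction is correct and matches the intended argument: the equivalence $\mathcal{C}(E)\simeq\Rep(Q(E))$ is exact, sends the simple at $m_i$ to $E_i$, and preserves indecomposability, so an indecomposable of $Q(E)$ of dimension $(d_1,\ldots,d_r)$ produces an indecomposable of $Q$ of dimension $\sum_i d_i\alpha_i$.

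The $(\Rightarrow)$ direction, however, has a genuine gap, in fact two compounding ones. First, the assertion that $\Phi$ carries $F(Q(E))$ into $F(Q)$ is false. If $\beta\in F(Q(E))$ then $(\Phi(\beta),\alpha_i)\leq 0$ for all $i$, but the defining inequalities of $F(Q)$ are $(\Phi(\beta),e_q)\leq 0$ for every vertex $q$ of $Q$, and those simple roots $e_q$ need not lie in the span of the $\alpha_i$. Concretely, take $Q$ with vertices $a,b,c$, one arrow $a\to b$ and two arrows $b\to c$, and $E=(S_c,M_{(1,1,0)})$ so that $Q(E)\cong K(2)$ and $\alpha_1=e_c$, $\alpha_2=(1,1,0)$. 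Then $(1,1)\in F(Q(E))$ but $\Phi(1,1)=(1,1,1)$ has $((1,1,1),e_a)=1>0$, so it does not lie in $F(Q)$; it is an isotropic root of $Q$ nevertheless (it is Weyl-conjugate to $(0,1,1)\in F(Q)$). Second, and more seriously, even if the fundamental domain claim were true, the conclusion you draw would only establish that $\Phi$ sends roots of $Q(E)$ to roots of $Q$ — i.e.\ it would re-prove $(\Leftarrow)$. The implication you actually need is the converse inclusion: that every root of $Q$ lying in the sublattice $\mathbb{Z}\alpha_1+\cdots+\mathbb{Z}\alpha_r$ is the image of a root of $Q(E)$. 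This requires showing, for example, that every real root of $Q$ in that sublattice lies in the orbit of $\{\alpha_1,\ldots,\alpha_r\}$ under the subgroup $W'=\langle s_{\alpha_1},\ldots,s_{\alpha_r}\rangle\subseteq W(Q)$, a nontrivial root-subsystem statement that is not addressed by the isometry and the reflection-intertwining alone (nor is it an immediate consequence of the isometry, since for wild quivers a positive vector of Tits form $1$ need not be a root). The paper itself does not give an argument here; it defers to Theorem~\ref{perpcat} together with Schofield and Derksen--Weyman, and the cited sources handle $(\Rightarrow)$ through the structure of perpendicular categories (roughly: extend $E$ to a complete exceptional sequence, identify $\mathcal{C}(E)$ with a perpendicular category, and exploit the resulting control on generic/canonical decompositions), rather than through a direct Weyl-group computation.
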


\subsection{Ringel's reflection functor}
We review some of the results of \cite[Section 1]{rin}. Fixing an exceptional representation $S$ and a full subcategory $\mathcal{C}$ of $\Rep(Q)$ we denote by $\mathcal{C}/S$ the category which has the same objects as $\mathcal{C}$ and the same maps modulo those factorizing through $\bigoplus^n_{i=1} S$ for some $n\in\Nn$. We define the following full subcategories of $\Rep(Q)$:
\begin{enumerate}
\item $\mathcal{M}^{-S}=\{X\in\Rep(Q)\mid\Hom(X,S)=0\}$ 
\item $\mathcal{M}_{-S}=\{X\in\Rep(Q)\mid \Hom(S,X)=0\}$.
\item $\mathcal{M}^S$ as the category of representations $X\in\Rep(Q)$ with $\Ext(S,X)=0$ such that, moreover, there does not exist a direct summand of $X$ which can be embedded into a direct sum of copies of $S$. 
\item $\mathcal{M}_S$ as the category of representations $X\in\Rep(Q)$ with $\Ext(X,S)=0$ such that, moreover, no direct summand of $X$ is a quotient of a direct sum of copies of $S$.

\end{enumerate}

Let $X\in\mathcal{M}^S$ and $\mathcal{B}:=\{\varphi_1,\ldots,\varphi_n\}$ be a basis of $\Hom(X,S)$. Following \cite[Lemma 2]{rin}, there exists an exact sequence 
\[\ses{X^{-S}}{X}{\bigoplus_{i=1}^n S}\]
induced by $\mathcal{B}$ such that the induced sequences $e_1,\ldots,e_n$ form a basis of $\Ext(S,X^{-S})$. Moreover, we have $X^{-S}\in\mathcal{M}^{-S}$. The other way around, if $Y\in\mathcal{M}^{-S}$ and $\{e_1,\ldots,e_n\}$ is a basis of $\Ext(S,Y)$ we have an induced sequence
\[\ses{Y}{Y^S}{\bigoplus_{i=1}^nS}\]
such that $Y^{S}\in\mathcal{M}^S$. We can proceed similarly for $X\in\mathcal M_S$ and $Y\in\mathcal M_{-S}$. Then we have the following theorem summarizing the results of \cite[Section 1]{rin}:
\begin{thm}\label{ringel}
\begin{enumerate} 
\item  There exists an equivalence of categories given by the functor $F:\mathcal{M}^S/S\rightarrow\mathcal{M}^{-S},\,X\mapsto X^{-S}.$
\item There exists an equivalence of categories given by the functor $G:\mathcal{M}_S/S\rightarrow\mathcal{M}_{-S},$ $X\mapsto X_{-S}$.
\item There exist equivalences $\Psi:\mathcal{M}^{S}_{-S}\rightarrow\mathcal{M}^{-S}_{S}$ and $\Phi:\mathcal{M}^S_S/S\rightarrow\mathcal{M}^{-S}_{-S}$ induced by composing the functors from above.
\end{enumerate}
\end{thm}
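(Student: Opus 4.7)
The three statements are equivalences of categories, so the natural plan is to produce explicit quasi-inverses and verify the required properties. Part (1) is the substantive content; part (2) follows by dualizing (e.g.\ by passing to the opposite quiver) and part (3) is obtained by restricting and composing the equivalences of (1) and (2). I would therefore concentrate on part (1).

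For part (1), I define $F(X) = X^{-S}$, the kernel of the canonical evaluation map $X \to S^n$ with $n = \dim \Hom(X,S)$ whose components run through a basis of $\Hom(X,S)$. The first substep is to apply $\Hom(-,S)$ to the universal sequence $0 \to X^{-S} \to X \to S^n \to 0$: since $S$ is exceptional one has $\Ext(S^n, S) = 0$ and $\Hom(S^n, S) \cong k^n$, and the map $\Hom(S^n, S) \to \Hom(X, S)$ is an isomorphism by the choice of basis, so the long exact sequence forces $\Hom(X^{-S}, S) = 0$ and hence $X^{-S} \in \mathcal{M}^{-S}$. Analogously, applying $\Hom(S,-)$ together with $\Ext(S, X) = 0$ and the assumption that no direct summand of $X$ is isomorphic to $S$ shows that the induced classes $e_1, \ldots, e_n$ really form a basis of $\Ext(S, X^{-S})$. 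On morphisms I set $F(f) = f|_{X^{-S}}$; a short check shows this lands in $(X')^{-S}$, because for any basis element $\varphi'_j \in \Hom(X', S)$ the pullback $\varphi'_j \circ f$ is a linear combination of basis elements of $\Hom(X, S)$, all of which vanish on $X^{-S}$. Any morphism that factors through a sum of copies of $S$ restricts to zero on $X^{-S}$ because $\Hom(X^{-S}, S) = 0$, so $F$ descends to $\mathcal{M}^S/S$.

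For fully faithful, the faithfulness is immediate: if $f|_{X^{-S}} = 0$ then $f$ factors through $X/X^{-S} \cong S^n$. For fullness, given $\psi \colon X^{-S} \to (X')^{-S}$ I try to extend along $X^{-S} \hookrightarrow X$; the obstruction lies in $\Ext(S^n, X')$ which vanishes since $X' \in \mathcal{M}^S$, so the extension exists, and any two extensions differ by a morphism factoring through $S^n$. For essential surjectivity I construct, for $Y \in \mathcal{M}^{-S}$, the representation $Y^S$ as the universal extension of $S^n$ by $Y$ along a basis of $\Ext(S, Y)$. Applying $\Hom(S,-)$ to the defining sequence shows that the connecting map $\Hom(S, S^n) \to \Ext(S, Y)$ is an isomorphism by construction, and together with $\Ext(S, S^n) = 0$ this forces $\Ext(S, Y^S) = 0$; applying $\Hom(-,S)$ and using $\Hom(Y, S) = 0$ gives $\dim \Hom(Y^S, S) = n$, and comparing with the defining sequence one sees that the evaluation map $Y^S \to S^n$ coincides, up to an automorphism of $S^n$, with the quotient map of the universal sequence, so $(Y^S)^{-S} \cong Y$ naturally.

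Parts (2) and (3) are then short. Part (2) is the formal dual of (1), replacing kernels by cokernels and swapping $\Hom(-,S)$ with $\Hom(S,-)$ throughout. For part (3) a further sequence of long exact sequence computations shows that $F$ restricts to a functor $\mathcal{M}^S_{-S} \to \mathcal{M}^{-S}_S$ and similarly for $G$, after which the equivalences $\Psi$ and $\Phi$ are obtained by composition. The main obstacle I foresee is verifying that the auxiliary no-summand conditions built into the definitions of $\mathcal{M}^S$ and $\mathcal{M}_S$ are preserved by the functors, and in part (3) that both perpendicularity conditions on $\Hom$ and on $\Ext$ survive simultaneously; these do not drop out of a single long exact sequence but require combining the exceptionality of $S$ with the minimality of the universal extension so as to exclude a direct summand of $X^{-S}$ isomorphic to a quotient of a power of $S$.
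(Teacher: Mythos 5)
This theorem is not proved in the paper: it is stated explicitly as a summary of results from \cite[Section~1]{rin}, and the auxiliary exact sequences it relies on are cited as \cite[Lemma~2]{rin}. So there is no ``paper's proof'' to compare against; what you have written is a reconstruction of Ringel's original argument, and in broad outline it follows Ringel's approach (pass to the kernel of the evaluation map $X\to S^{\dim\Hom(X,S)}$, show the restriction functor is well defined on $\mathcal M^S/S$, prove full faithfulness via the long exact sequences and essential surjectivity via universal extensions, dualize for (2), compose for (3)).

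However, your sketch treats as automatic the two facts that are precisely the content of \cite[Lemma~2]{rin}, and this is where the real work lies. First, you write ``the universal sequence $0\to X^{-S}\to X\to S^n\to 0$'' as if the evaluation map $e:X\to S^n$ were automatically surjective; it is not, and proving surjectivity uses $\Ext(S,X)=0$ in a nontrivial way (one shows $\Ext(S,\mathrm{Im}(e))=0$ and $\Hom(\mathrm{coker}(e),S)=\Ext(\mathrm{coker}(e),S)=0$, then argues a nonzero quotient of $S^n$ in ${}^\perp S$ cannot exist). Second, the claim that $e_1,\dots,e_n$ form a \emph{basis} of $\Ext(S,X^{-S})$ amounts to the injectivity of the connecting map $\delta:\Hom(S,S^n)\to\Ext(S,X^{-S})$, equivalently to the surjectivity of $\Hom(S,X)\to\Hom(S,S^n)$; this is exactly where the hypothesis that no direct summand of $X$ embeds into a power of $S$ is needed, and your parenthetical ``no direct summand of $X$ is isomorphic to $S$'' is weaker than the condition actually appearing in the definition of $\mathcal M^S$. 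Your closing paragraph correctly identifies the preservation of the no-summand conditions and the simultaneous vanishing of Hom and Ext in part (3) as genuine obstacles; these, together with the two points above, are the substantive steps you would have to carry out to turn the sketch into a proof, and they are exactly what Ringel's Lemmas 1 and 2 supply.
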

\subsection{Tree and tree-shaped bases}\label{tree}
We introduce coefficient quivers and tree modules following \cite{rin1}, see also \cite{wei}. Let $X$ with $\underline{\dim} X=\alpha$ be a representation of $Q$. A basis of $X$ is a subset $\mathcal{B}$ of $\bigoplus_{q\in Q_0}X_q$ such that
\[\mathcal{B}_q:=\mathcal{B}\cap X_q\] is a basis of $X_q$ for all vertices $q\in Q_0$. For every arrow $\rho:q\rightarrow q'$ we may write $X_{\rho}$ as a $(\alpha_{q'}\times \alpha_q)$-matrix $X_{\rho,\mathcal{B}}$ with coefficients in $k$ such that the rows and columns are indexed by $\mathcal{B}_{q'}$ and $\mathcal{B}_q$ respectively.
\begin{defi}
The coefficient quiver $\Gamma(X,\mathcal{B})$ of a representation $X$ with a fixed basis $\mathcal{B}$ has vertex set $\mathcal{B}$ and arrows between vertices are defined by the condition: if $(X_{\rho,\mathcal{B}})_{b',b}\neq 0$, there exists an arrow $(\rho,b,b'):b\rightarrow b'$ where $b\in \mathcal B_q$, $b'\in \mathcal B_{q'}$ and $\rho:q\to q'$.\\
A representation $X$ is called a tree module if there exists a basis $\mathcal{B}$ for $X$ such that the corresponding coefficient quiver is a tree.
\end{defi}

Let $M_{m,n}(k)$ be the set of $m\times n$ matrices with coefficients in $k$ and for $A\in M_{m,n}(k)$ denote by $A_{i,j}$ the $(i,j)$-entry. We denote by $E(s,t)\in M_{m,n}(k)$ the matrix with $E(s,t)_{s,t}=1$ and zero otherwise. If $Y$ is another representation of $Q$, we call a basis $\{E(f_1),\ldots,E(f_n)\}$ of $\Ext(X,Y)$ tree-shaped if for all $i=1,\ldots, n$ there exists $s,t$ such that we have $(f_i)_{\rho}=E(s,t)$ for exactly one $\rho\in Q_1$ and $(f_i)_{\rho '}=0$ for $\rho '\neq \rho$. Since we can clearly choose a tree-shaped basis $\mathcal{B}$ of $\bigoplus_{\rho:q\rightarrow q'\in Q_1}\Hom_k(X_q,Y_{q'})$, we can choose a basis of $\Ext(X,Y)$ consisting of elements of the form $b+\mathrm{Im}(d_{X,Y})$ with $b\in\mathcal{B}$, see also the proof of \cite[Lemma 3.16]{wie}.

In order to construct a tree module and its coefficient quiver respectively, it is often useful to consider the universal covering quiver $\tilde Q$ of the given quiver $Q$. To do so, we use the notation of \cite[Section 2]{wei}.

Let $W_Q=\langle \rho,\rho^{-1}\mid \rho\in Q_1\rangle$ be the set of words in $Q_1$. Every representation $\tilde{X}$ of $\tilde{Q}$ gives rise to a representation of $X$ of $Q$ in the following way:
\[X_q=\bigoplus_{w\in W_Q} \tilde{X}_{(q,w)},\quad q\in Q_0\]
and for an arrow $\rho:q\to q'$ we define $X_{\rho}:X_q\rightarrow X_{q'}$ by $X_{\rho}|_{\tilde X_{(q,w)}}=\tilde{X}_{\rho_{(q,w)}}$. Now we can make use of the following result, see \cite[Lemma 3.5]{gab}:
\begin{thm}
If $\tilde{X}$ is an indecomposable representation of $\tilde{Q}$, the corresponding representation $X$ of $Q$ is also indecomposable.
\end{thm}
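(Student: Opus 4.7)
The plan is to argue by contrapositive: assume the push-down $X$ admits a non-trivial decomposition $X=Y\oplus Z$, equivalently a non-trivial idempotent $e\in\End_Q(X)$, and then extract from $e$ a non-trivial idempotent in $\End_{\tilde Q}(\tilde X)$, contradicting indecomposability of $\tilde X$.

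The starting observation is that $X$ carries the canonical $W_Q$-grading $X_q=\bigoplus_{w\in W_Q}\tilde X_{(q,w)}$. Any $k$-linear endomorphism of $X_q$ therefore decomposes uniquely as $e_q=\sum_{g\in W_Q} e_q^g$, where $e_q^g$ collects the block components $\tilde X_{(q,w)}\to\tilde X_{(q,gw)}$ as $w$ varies. Because a lifted arrow has the shape $\rho_{(q,w)}\colon(q,w)\to(q',w\rho)$, the intertwining relation $X_\rho\circ e_q=e_{q'}\circ X_\rho$ decouples across the grading into one relation per $g\in W_Q$: explicitly, each family $(e_q^g)_{q\in Q_0}$ assembles into a morphism of $\tilde Q$-representations $\tilde X\to g\cdot\tilde X$, where $g\cdot\tilde X$ denotes the translate of $\tilde X$ under the free left action $(q,w)\mapsto(q,gw)$ of $W_Q$ on $\tilde Q$.

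Next I would show $e^g=0$ for every $g\neq 1$. Since $\tilde Q$ is a universal cover, it is a disjoint union of trees, and the (finite) support of $\tilde X$ must be contained in a single such tree $T$, for otherwise $\tilde X$ would decompose along the connected components of its support, contradicting indecomposability. For $g\neq 1$ the translate $g\cdot T$ is vertex-disjoint from $T$, so $\Hom_{\tilde Q}(\tilde X, g\cdot\tilde X)=0$ and hence $e^g=0$. Consequently $e=e^1$ lies in $\End_{\tilde Q}(\tilde X)$, and the identity $e^2=e$ unfolds, via the convolution law $(e^2)^1=\sum_g e^g\,e^{g^{-1}}$, simply to $(e^1)^2=e^1$. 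Krull--Schmidt (locality of $\End_{\tilde Q}(\tilde X)$) then forces $e^1\in\{0,\mathrm{id}\}$, which contradicts the non-triviality of $e$.

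The main obstacle is the middle step: one must pin down carefully that a lifted arrow shifts the $W_Q$-label by right multiplication (so that the $g$-decoupling really holds), and then verify that distinct $W_Q$-translates of the finite connected support of $\tilde X$ are genuinely vertex-disjoint in $\tilde Q$, which rests on the tree structure of the universal cover together with the freeness of the translation action. Once those combinatorial facts are settled, the block-matrix manipulation and the final appeal to Krull--Schmidt are routine.
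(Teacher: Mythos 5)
The paper does not prove this statement; it is quoted verbatim from Gabriel's covering theory (cited as \cite[Lemma 3.5]{gab}), so there is no internal proof to compare against. Assessed on its own terms, your proposal contains a genuine gap at precisely the step you flag as needing care.

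The decoupling argument is sound: for $g\in W_Q$ the family $(e_q^g)_q$ does assemble into a morphism $\tilde X\to\sigma_g^*\tilde X$ (where $\sigma_g(q,w)=(q,gw)$), and the product in $\End_Q(X)$ is graded by $W_Q$. The problem is the claim that for $g\neq 1$ the translate $g\cdot T$ is vertex-disjoint from $T$. This is false as soon as $Q$ has an unoriented cycle. The left action of $W_Q$ permutes the connected components of $\tilde Q$ \emph{transitively} (when $Q$ is connected), and the stabiliser of a given component $T$ is a conjugate of the fundamental group $\pi_1(Q)$, a free group that is nontrivial precisely when $Q$ is not a tree. For any $g\neq 1$ in this stabiliser one has $g\cdot T=T$, not disjoint from $T$. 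Freeness of the $W_Q$-action on vertices only guarantees that no vertex is fixed; it does not prevent $g$ from carrying one vertex of the finite support onto another. Concretely, take $Q=K(2)$ with arrows $a,b\colon q_1\to q_2$; then $g=ab^{-1}$ stabilises the component of $(q_1,1)$, and with $\tilde X$ the $A_5$-interval on $(q_1,ba^{-1})\to(q_2,b)\leftarrow(q_1,1)\to(q_2,a)\leftarrow(q_1,ab^{-1})$ the set $g\cdot\mathrm{supp}(\tilde X)$ meets $\mathrm{supp}(\tilde X)$ in three vertices. Consequently, the vanishing $\Hom_{\tilde Q}(\tilde X,\sigma_g^*\tilde X)=0$ cannot be obtained from disjointness of supports, and the inference $e^g=0$ is unjustified.

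What freeness together with finiteness of the support \emph{does} give is the weaker, and in fact the relevant, statement: $\sigma_g^*\tilde X\not\cong\tilde X$ for $g\neq 1$, because an isomorphism would force $g$ to permute the finite support, contradicting that a nontrivial element of a free group has infinite order and acts without fixed points. Gabriel's actual argument rests on this non-isomorphism, not on disjointness: one regards $\End_Q(X)=\bigoplus_{g}\Hom_{\tilde Q}(\tilde X,\sigma_g^*\tilde X)$ as a $W_Q$-graded algebra whose degree-one part is local, all whose other homogeneous components consist of non-isomorphisms between pairwise non-isomorphic indecomposables, and whose support in $W_Q$ is finite; one then shows that the subspace spanned by the radical of the degree-one part together with all components of degree $\neq 1$ is a nilpotent two-sided ideal with residue field $k$, using the finiteness of the support and the torsion-freeness (indeed bi-orderability) of the free group $W_Q$. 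This algebraic step, absent from your sketch, is where the real content lies. So the overall strategy is right, but the reduction ``$e^g=0$ for $g\neq 1$'' is not available, and the argument as written does not close.
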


Note that it is straightforward to check that every indecomposable tree module is already a representation of a connected component of the universal cover.

For the simple reflection $s_q\in W(Q)$ at $q$ define $\tilde{s}_q:=\prod_{w\in W_Q}s_{(q,w)}\in W(\tilde{Q})$ where $s_{(q,w)}$ is the simple reflection at $(q,w)$. Note that the product can be assumed to be finite when applied to a dimension vector with finite support and, moreover, that $\tilde{s}_{(q,w)}\cdot \tilde{s}_{(q,w')}=\tilde{s}_{(q,w')}\cdot \tilde{s}_{(q,w)}$. Thus $\tilde{s}_q$ is independent of the chosen order. Now we can recursively define $\tilde{\sigma}$ for every $\sigma\in W(Q)$.

Recall the conjecture of Ringel, see \cite[Problem 9]{rin3}, saying that there exists an indecomposable tree module for every root of a quiver without oriented cycles. The following statement assures that it is enough to show the existence of indecomposable tree modules for all roots of quivers which are trees:
\begin{pro}\label{dec}
For every root $\alpha$ of $Q$ there exists a root $\tilde{\alpha}$ of $\tilde{Q}$ such that we have $\sum_{w\in W_Q}\tilde{\alpha}_{(q,w)}=\alpha_q$.
\end{pro}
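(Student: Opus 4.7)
The plan is a two-stage reduction: real roots are handled by lifting the Weyl group action from $Q$ to $\tilde Q$, while imaginary roots reduce to lifting an element of the fundamental domain, for which I would give an explicit combinatorial construction inside $\tilde Q$.

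First I would establish that the pushforward $\pi_\ast:\Zn\tilde Q_0\to\Zn Q_0$ defined by $(\pi_\ast\tilde\alpha)_q=\sum_{w\in W_Q}\tilde\alpha_{(q,w)}$ intertwines the Weyl group actions, i.e.\ $\pi_\ast\tilde s_q=s_q\pi_\ast$ for every $q\in Q_0$. This is a direct computation: the $p$-component is preserved for $p\neq q$, while for the $q$-component one uses that the neighbors of $(q,w)$ in $\tilde Q$ are in bijection, for each $p\in Q_0$, with the edges of $Q$ between $q$ and $p$, so that summing over the fibre collapses the reflection formula for $\tilde s_q$ to that for $s_q$. Iterating along a reduced expression gives $\pi_\ast\tilde\sigma=\sigma\pi_\ast$ for every $\sigma\in W(Q)$.

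The case of real roots is then immediate: write $\alpha=\sigma(e_q)$ with $\sigma\in W(Q)$, pick any $w\in W_Q$, and set $\tilde\alpha:=\tilde\sigma(e_{(q,w)})$. Then $\tilde\alpha$ is a root of $\tilde Q$ as a Weyl reflection of a simple root, and by the intertwining $\pi_\ast\tilde\alpha=\sigma(e_q)=\alpha$. For an imaginary root I write $\alpha=\sigma(\beta)$ with $\beta\in F(Q)$; the intertwining reduces the problem to producing a root $\tilde\beta$ of $\tilde Q$ with $\pi_\ast\tilde\beta=\beta$, because then $\tilde\alpha:=\tilde\sigma(\tilde\beta)$ lifts $\alpha$.

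For the lift of $\beta\in F(Q)$ my proposal is to build a connected subtree $C\subseteq\tilde Q$ whose indicator $\mathbf{1}_C$ pushes forward to $\beta$. The indicator of a connected subtree of $\tilde Q$ is automatically a real root of $\tilde Q$, as the identity $\mathbf{1}_C=s_v(\mathbf{1}_{C\setminus v})$, valid for any leaf $v$, gives by induction on $|C|$. I would construct $C$ starting from the unique lift in $\tilde Q$ of a spanning tree of $\mathrm{supp}(\beta)$ — giving a connected 0/1-vector $\tilde\gamma_0$ with $\pi_\ast\tilde\gamma_0=\mathbf{1}_{\mathrm{supp}(\beta)}$ — and then repeatedly enlarging the current set by adjoining a single vertex of $\tilde Q$ adjacent to but not in the current support, lying above some $q$ with $\gamma_q<\beta_q$, where $\gamma=\pi_\ast\tilde\gamma$ is the current pushforward. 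Since $\tilde Q$ is a tree, every vertex outside a connected set has at most one neighbor inside, so the number of $q$-coloured neighbors of the current support equals exactly $\sum_{q'\sim q}k_{q'q}\gamma_{q'}$, where $k_{q'q}$ denotes the number of edges between $q$ and $q'$ in $Q$; the enlargement at $q$ is blocked precisely when $\gamma_q\ge\sum_{q'\sim q}k_{q'q}\gamma_{q'}$, equivalently $(\gamma,e_q)\ge\gamma_q$.

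The main obstacle, which I expect to be the delicate step, is showing that one can always find some $q$ with $\gamma_q<\beta_q$ for which this blockage does not occur. The defining inequalities $(\beta,e_q)\le 0$ of $F(Q)$, together with $\gamma\le\beta$, $\gamma\ne\beta$ and the bound $\gamma_q\ge 1$ on $\mathrm{supp}(\beta)$, have to be leveraged to rule out simultaneous blockage on all of $D=\{q:\gamma_q<\beta_q\}$: simultaneous blockage forces $(\beta-\gamma,e_q)\le -\gamma_q<0$ for every $q\in D$, which through bookkeeping along the boundary of $D$ should contradict $\beta-\gamma\ge 0$. This saturation estimate is the step I expect to require the most care and is precisely where the hypothesis $\beta\in F(Q)$ is decisive.
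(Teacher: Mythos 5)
Your treatment of the real-root case and the reduction of an imaginary root to a representative $\beta\in F(Q)$ via the intertwining $\pi_*\tilde\sigma=\sigma\pi_*$ is correct and coincides with what the paper does. For $\beta\in F(Q)$, however, you take a genuinely different route: the paper shows every root in the fundamental domain is a Schur root and then invokes the existence of indecomposable tree modules for Schur roots (\cite[Theorem 3.19]{wei}); the coefficient quiver of such a tree module is exactly the connected subtree $C\subseteq\tilde Q$ you are trying to build by hand. Replacing that citation by a direct greedy construction would be an attractive elementary alternative, and your observation that $\mathbf{1}_C$ is a (positive) real root of $\tilde Q$ for any finite connected subtree $C$ is correct, but the construction itself has two problems.

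The first is a genuine combinatorial error. The number of $q$-coloured vertices adjacent to $C$ but lying outside $C$ is not $\sum_{q'\sim q}k_{q'q}\gamma_{q'}$: this sum counts \emph{all} edges from vertices of $C$ to $q$-coloured vertices, and one must subtract the number $e_q(C)$ of edges of the tree $C$ that touch a $q$-coloured vertex. Since $e_q(C)\ge\gamma_q$ (once $|C|\ge 2$, each $q$-coloured vertex of $C$ has degree at least one) and may be strictly larger, genuine blockage at $q$ is equivalent to $e_q(C)=\sum_{q'\sim q}k_{q'q}\gamma_{q'}$, for which your condition $\gamma_q\ge\sum_{q'\sim q}k_{q'q}\gamma_{q'}$ is sufficient but not necessary; consequently the inequality $(\beta-\gamma,e_q)\le-\gamma_q$ does \emph{not} follow from the assumption that every $q\in D$ is blocked. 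The second problem, which you flag yourself, is that even granting the inequality the claimed contradiction is not established: for $\delta=\beta-\gamma$ all it yields directly is $(\delta,\delta)=\sum_{q\in D}\delta_q(\delta,e_q)<0$, which is perfectly compatible with $\delta\ge0$ and a connected support (it merely places $\delta$ in the imaginary cone of the subquiver on $D$). So the saturation step you defer is not a routine bookkeeping argument; as written there is a gap exactly where the paper instead quotes the tree-module theorem, and the blockage criterion it would rest on is wrong.
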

\begin{proof}
The statement is clear for simple roots. If $\alpha$ is real, then there exists a $\sigma\in W(Q)$ such that $\alpha=\sigma\cdot e_q$ for a simple root $e_q$. Thus $\tilde{\alpha}=\tilde{\sigma}\cdot e_{(q,1)}$, where $1$ denotes the empty word, satisfies the statement. 
Thus let $\alpha$ be an imaginary root. We can assume that $\alpha\in F(Q)$. Indeed, for a general imaginary root we can again apply reflections like we did in the case of real roots.

Every root in the fundamental domain is a Schur root because otherwise there exists a real Schur root $\beta$ in the canonical decomposition such that $s_{\beta}(\alpha)<\alpha$. Thus, since
\[s_{\beta}(\alpha)=\alpha-\sum_{q\in Q_0}\beta_q(\alpha,e_q)\beta<\alpha,\] 
we have $(\alpha,e_q)>0$ for at least one $q\in Q_0$.

By \cite[Theorem 3.19]{wei} there exists an indecomposable tree module for every Schur root and thus for every root in the fundamental domain. But this is, as already mentioned, a representation of the universal covering quiver.
\end{proof}

In the last part of this section, we want to discuss  a question related to Ringel's conjecture. We fix an indecomposable representation $X$ of a quiver $Q$ with $\alpha:=\underline{\dim} X$ and a basis $\mathcal B$ of $X$. Then we denote by $a(X,\mathcal{B})$ the number of arrows of the coefficient quiver $\Gamma(X,\mathcal B)$. There are two natural questions related to the conjecture of Ringel: 
\begin{itemize}
\item What is $m(X):=\min\{ a(X,\mathcal B)\mid \mathcal B\text{ basis of } X\}$? 
\item What is $m(\alpha):=\max\{ m(X)\mid X\text{ indecomposable},\, \underline{\dim}X=\alpha\}?$
\end{itemize}
Since Kac's Theorem predicts a $1-\Sc{\alpha}{\alpha}$-parameter family of indecomposable representation, the first guess could be that we have 
\begin{equation}\label{eq1}\sum_{q\in Q_0}\alpha_q-1\leq m(X)\leq \sum_{q\in Q_0}\alpha_q-\Sc{\alpha}{\alpha},\end{equation}

\begin{equation}\label{eq2}m(\alpha)=\sum_{q\in Q_0}\alpha_q-\Sc{\alpha}{\alpha}.\end{equation} 
Note that the first inequality trivially follows from the indecomposability of $X$. Moreover, $X$ is an indecomposable tree module exactly if it is an equality. In turn for exceptional representations the inequalities clearly hold because they are tree modules by \cite{rin1}. 
Unfortunately, the answer is not always so easy as the following two easy examples show:
\begin{ex}

Consider the Kronecker quiver $Q=K(2)$ and $\alpha=(2,2)$. Up to isomorphism and permutation of the two linear maps, the indecomposables are given by
\[X(\lambda):=((k^2,k^2),(\begin{pmatrix}1&0\\0&1\end{pmatrix},\begin{pmatrix}\lambda&1\\0&\lambda\end{pmatrix}))\]
where $\lambda\in k$. Thus we actually have $m(X(\lambda))=5$ if $\lambda\neq 0$ showing that inequalities (\ref{eq1}) and (\ref{eq2}) are not true in this case.
\end{ex}
\begin{ex}\label{subspace}

 But there also seem to be many examples where the inequalities are true. One of these is obtained when considering the $n$-subspace quiver, i.e. $Q_0=\{q_0,\ldots,q_n\}$ and $Q_1=\{\rho_i:q_i\to q_0\mid i=1,\ldots,n\}$, and the root $(2,1,\ldots,1)$.\end{ex}

This raises the following question: 
\begin{que}
Let $Q$ be a quiver.  
\begin{itemize}
\item For which indecomposable representations $X$ does inequality $(\ref{eq1})$ hold?
\item For which roots does inequality $(\ref{eq2})$ hold?
\end{itemize}
\end{que}

\section{Recursive construction of indecomposable representations}
\noindent In this section we present two conditions on sequences of quiver representations which assure that the functor introduced in Section  \ref{functorsec} preserves indecomposability. Consequently, it can be used to construct indecomposable representations recursively. The first condition fills the gap to quivers with oriented cycles which are not covered by \cite{sch}. Moreover, the second condition which we investigate includes some of the functors considered by Ringel, see Theorem \ref{ringel} and also Remark \ref{remrin}.

\subsection{Indecomposable representations obtained by gluing Schur representations}\label{functor1}
Let $Q$ be quiver without oriented cycles. Recall from Section \ref{experp} that, if we fix a reduced exceptional sequence $E$ in $Q$, the exceptional representations of this sequence are precisely the simple objects of the category $(^\perp E)^{\perp}$. Thus the functor $F_E$ from the category $\Rep (Q(E))$ to $\Rep(Q)$ is a fully faithful embedding. We generalize this to the effect that we do not restrict to reduced exceptional sequences as the building blocks of the induced subcategory of $\Rep (Q)$. We rather allow extensions in both directions, but we also do not allow homomorphisms. Moreover, we do not restrict to exceptional representations. In this case it will turn out that the functor is a fully faithful embedding. Thus it also preserves indecomposable representations. In order to prove this we mainly proceed along the lines of \cite[Section 4]{pet}. In \cite{pet}, W. Peternell restricts to the case where the quiver $Q(M)$ is bipartite, i.e. every vertex is either a sink or a source. In particular, in this case every representation of $Q$ is already given as the middle term of an exact sequence. In return, in \cite{pet} morphisms between certain representations are allowed and, moreover, the representations under consideration do not have to be Schurian. 
 
\begin{defi}
A sequence of Schur roots $S=(\alpha_1,\ldots,\alpha_r)$ such that $\hom(\alpha_i,\alpha_j)=0$ for $i\neq j$ is called an elementary sequence (of Schur roots). 

A sequence of Schur representations $M=(M_1,\ldots,M_r)$ with $\Hom(M_i,M_j)=0$ for $i\neq j$ is called an elementary sequence (of Schur representations).
\end{defi} 
If all roots (resp. representations) are exceptional, we call such a sequence elementary sequence of exceptional roots (resp. exceptional representations).

We fix an elementary sequence $S=(\alpha_1,\ldots,\alpha_r)$. Then we fix an elementary sequence of Schur representations $M=(M_1,\ldots,M_r)$ with $\underline{\dim}M_i=\alpha_i$. Moreover, fix  
$$\mathcal{B}_{ij}=\{\chi^{ij}_1,\ldots,\chi^{ij}_{n_{ij}}\}\subseteq\bigoplus_{\rho:q\rightarrow q'\in Q_1}\Hom_k((M_i)_q,(M_j)_{q'})$$ such that the corresponding residue classes are a basis of $\Ext(M_i,M_j)$. Recall that we can always assume that this basis is tree-shaped. This is very helpful in order to construct indecomposable tree modules.
We denote by $\mathrm{add} (M_i)$ the full subcategory of $\mathrm{Rep}(Q)$ whose objects consist of direct sums of $M_i$. Denoting by $F=F_M$ the functor introduced in Section \ref{functorsec} and using the notation introduced there, we have the following lemma:
\begin{lem}\label{hom0}
\begin{enumerate}
\item The functor $F$ induces a fully faithful embedding $$F|_{M_i}:\Rep(Q(M_i))\rightarrow\mathrm{add}(M_i).$$
\item Let $X$ and $X'$ be two representations of $Q(M)$. Let $r=2$ and assume that $X_{\chi_l^{12}}=X'_{\chi_l^{12}}=0$ for $l=1,\ldots,n_{12}$. Then for every morphism $f:FX\rightarrow FX'$ we have
\[f=(f_q)_{q\in Q_0}=\begin{pmatrix}(f_q)_{1,1}&0\\0&(f_q)_{2,2}
\end{pmatrix}_{q\in Q_0}\]
with $f_{ii}:=((f_q)_{i,i})_{q\in Q_0}\in\Hom_{Q}(FS_iX,FS_iX')$.
\end{enumerate}
\end{lem}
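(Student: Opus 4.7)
The plan is to treat (1) and (2) separately, reducing each to the $\Hom$-vanishing hypotheses of an elementary sequence; for the harder half of (2) I would also invoke the isomorphism $\Theta^{21}_{X,X'}$ from Section~\ref{functorsec}. For part (1), the quiver $Q(M_i)$ has the single vertex $m_i$ and no loops, so $\Rep(Q(M_i))$ identifies canonically with the category of finite-dimensional vector spaces via $X\mapsto X_{m_i}$. Under this identification $F|_{M_i}$ sends $V$ to $M_i^{\dim V}$ and a $k$-linear map $\phi\colon V\to V'$ to $\mathrm{id}_{M_i}\otimes\phi$. Faithfulness is immediate (cf.\ Remark~\ref{bem}), and for fullness the Schurian hypothesis $\End(M_i)=k$ identifies $\Hom_Q(M_i^a,M_i^b)\cong M_{b,a}(k)$, which is precisely the isomorphism realized by $\mathrm{id}_{M_i}\otimes(-)$.

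For part (2), the assumption $X_{\chi^{12}_l}=X'_{\chi^{12}_l}=0$ makes $((FX)_\rho)_{2,1}$ and $((FX')_\rho)_{2,1}$ vanish for every arrow $\rho$, so $FS_1X\hookrightarrow FX$ and $FS_1X'\hookrightarrow FX'$ are subrepresentations with quotients $FS_2X$ and $FS_2X'$ respectively. To show $(f_q)_{2,1}=0$, I would consider the composition $FS_1X\hookrightarrow FX\xrightarrow{f}FX'\twoheadrightarrow FS_2X'$, whose value at vertex $q$ is exactly $(f_q)_{2,1}$; since this is a morphism in $\Rep(Q)$ from a direct sum of copies of $M_1$ to a direct sum of copies of $M_2$, it must vanish by $\Hom(M_1,M_2)=0$.

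To establish $(f_q)_{1,2}=0$, I would expand the identity $(FX')_\rho\circ f_q=f_{q'}\circ(FX)_\rho$ in its $(1,2)$-block and, using the just-proved $(f)_{2,1}=0$, rewrite it as
\[d_{FS_2X,FS_1X'}((f)_{1,2})_\rho=(f_{q'})_{1,1}\circ\Bigl(\sum_l(\chi^{21}_l)_\rho\otimes X_{\chi^{21}_l}\Bigr)-\Bigl(\sum_l(\chi^{21}_l)_\rho\otimes X'_{\chi^{21}_l}\Bigr)\circ(f_q)_{2,2}.\]
Via part~(1), writing $(f_{q'})_{1,1}=\mathrm{id}_{(M_1)_{q'}}\otimes g_1$ and $(f_q)_{2,2}=\mathrm{id}_{(M_2)_q}\otimes g_2$ for $k$-linear maps $g_i\colon X_{m_i}\to X'_{m_i}$, a direct computation identifies the right-hand side as $F\bigl((g_1\circ X_{\chi^{21}_l}-X'_{\chi^{21}_l}\circ g_2)_l\bigr)_\rho$; hence this image of $F$ lies in $\mathrm{Im}(d_{FS_2X,FS_1X'})$. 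Since $\Theta^{21}_{X,X'}$ is an isomorphism and factors as $F$ followed by the projection modulo $\mathrm{Im}(d)$, the tuple $(g_1\circ X_{\chi^{21}_l}-X'_{\chi^{21}_l}\circ g_2)_l$ must vanish, so $d_{FS_2X,FS_1X'}((f)_{1,2})=0$ and $(f)_{1,2}$ becomes a morphism $FS_2X\to FS_1X'$ in $\Rep(Q)$, which is zero by $\Hom(M_2,M_1)=0$. The main technical hurdle is the identification of the right-hand side of the $(1,2)$-block equation as an element of $\mathrm{Im}(F)$; once that is in place, the $\Theta^{21}$-isomorphism immediately forces $d((f)_{1,2})=0$, and the elementary-sequence $\Hom$-vanishings close both halves of the lemma.
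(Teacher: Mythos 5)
Your proof is correct and follows essentially the same route as the paper's: for (1) both reduce to $\End(M_i)=k$, and for (2) both first kill the $(2,1)$-block via $\Hom(M_1,M_2)=0$, then rewrite the $(1,2)$-block of the morphism equation so that $d_{FS_2X,FS_1X'}(f_{1,2})$ appears as $F$ applied to a tuple of $k$-linear maps, and finally use the injectivity of $\Theta^{21}_{X,X'}$ (which is $F$ followed by the quotient to $\Ext$) to force that tuple, hence $d(f_{1,2})$, to vanish, leaving a genuine morphism that is zero by $\Hom(M_2,M_1)=0$. The only cosmetic difference is that you make the $(f_q)_{2,1}=0$ step explicit via the composition $FS_1X\hookrightarrow FX\to FX'\twoheadrightarrow FS_2X'$, whereas the paper asserts it directly, and you work with the scalar maps $g_i=(\varphi_i)_{m_i}$ rather than the morphisms $\varphi_i$ of $Q(M_i)$, which for a one-vertex loop-free quiver is the same data.
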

\begin{proof}
The first statement follows because we have $\Hom_Q(M_i,M_i)=k$.

Let $f:FX\rightarrow FX'$ be a morphism. Thus we have $(FX')_{\rho}\circ f_q=f_{q'}\circ (FX)_{\rho}$ for every $\rho:q\to q'$. Since $X_{\chi_l^{12}}=X'_{\chi_l^{12}}=0$ for $l=1,\ldots,n_{12}$ by assumption, we have $((FX)_{\rho})_{2,1}=0$ and $((FX')_{\rho})_{2,1}=0$. Using $\Hom_Q(M_1,M_2)=0$ we obtain that $f$ is of the form
\[f=(f_q)_{q\in Q_0}=\begin{pmatrix}(f_q)_{1,1}&(f_q)_{1,2}\\0&(f_q)_{2,2}
\end{pmatrix}_{q\in Q_0}\]
with morphisms $f_{i,i}\in\Hom_Q(FS_iX,FS_iX')$ and a linear map $$f_{1,2}:=((f_q)_{1,2})_{q\in Q_0}\in\bigoplus_{q\in Q_0}\Hom_k(\tilde X_{2,q},\tilde X'_{1,q}).$$ Moreover, we get
\begin{eqnarray*}d_{FS_2X,FS_1X'}(f_{1,2})&=&(((FX')_{\rho})_{1,1}\circ(f_{q})_{1,2}-(f_{q'})_{1,2}\circ ((FX)_{\rho})_{2,2}))_{\rho:q\to q'\in Q_1}\\
&=&((f_{q'})_{1,1}\circ ((FX)_{\rho})_{1,2}-((FX')_{\rho})_{1,2}\circ(f_{q})_{2,2})_{\rho:q\to q'\in Q_1}\\
&\in&\bigoplus_{\rho:q\to q'\in Q_1}\Hom_k((FS_2X)_q,(FS_1X')_{q'})\end{eqnarray*}
where $d_{FS_2X,FS_1X'}$ is the map introduced in Section \ref{repsec}. In particular, the induced exact sequence splits. Since $F$ induces a fully faithful embedding when restricted to the quiver with one vertex, there exist morphisms $\varphi_i\in\Hom_{Q(M)}(S_iX,S_iX)$ such that $f_{i,i}=F(\varphi_i)$. Using the functoriality of $\Theta$ for $(g_\rho)_{\rho\in Q(M)_1}:=((\varphi_1)_{m_2}\circ X_{\rho}-X'_{\rho}\circ(\varphi_2)_{m_1})_{\rho\in Q(M)_1}$ we get
\begin{eqnarray*}\Theta^{21}_{X,X'}((g_\rho)_{\rho\in Q(M)_1})&=&
(F(\varphi_{1})_{q'}\circ ((FX)_{\rho})_{1,2}-((FX')_{\rho})_{1,2}\circ F(\varphi_{2})_{q})_{\rho:q\to q'\in Q_1}\\
&=&((f_{q'})_{1,1}\circ ((FX)_{\rho})_{1,2}-((FX')_{\rho})_{1,2}\circ(f_{q})_{2,2})_{\rho:q\to q'\in Q_1}
\\&=&d_{FS_2X,FS_1X'}(f_{1,2})
\end{eqnarray*}
But since $\Theta$ is an isomorphism, this already means that $d_{FS_2X,FS_1X'}(f_{1,2})=0$ and thus $f_{1,2}\in\Hom_Q(FS_2X,FS_1X')$. But since $\Hom_Q(M_2,M_1)=0$, it follows that $f_{1,2}=0$.
\end{proof}
This enables us to prove the main result of this section: 
\begin{thm}\label{thm1}
Let $M=(M_1,\ldots,M_r)$ be an elementary sequence of Schur representations. Then
$F_M$ defined in Section \ref{functorsec} is a fully faithful embedding. In particular, $F_MX$ is indecomposable if and only if $X$ is indecomposable. 

\end{thm}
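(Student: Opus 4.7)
The aim is to prove that $F := F_M$ is full; faithfulness is already observed in Remark \ref{bem}, and preservation of indecomposability then follows by the standard idempotent-lifting argument (any idempotent of $FX$ lifts uniquely through the fully faithful $F$ to an idempotent of $X$, which must be trivial). Given $f : FX \to FX'$, decompose it as a block matrix $f = ((f_q)_{i,j})_{q\in Q_0,\,1\leq i,j\leq r}$ with respect to the canonical decompositions $(FX)_q = \bigoplus_i \tilde X_{i,q}$ and $(FX')_q = \bigoplus_i \tilde X'_{i,q}$. My strategy is a direct generalization of the cocycle analysis carried out in the proof of Lemma \ref{hom0}(2).

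Writing the commutation relation $(FX')_\rho \circ f_q = f_{q'} \circ (FX)_\rho$ block by block, the $(i,j)$-entry for $i \neq j$ takes the schematic form
\[
d_{FS_jX,\,FS_iX'}(f_{i,j}) \;=\; \Theta^{ji}_{X,X'}\bigl(\text{diagonal contribution}\bigr) \;+\; C_{i,j}\bigl((f_{k,l})_{k \neq l,\,(k,l)\neq (i,j)}\bigr),
\]
where the ``diagonal contribution'' packages the cocycle defect of $(f_q)_{i,i}$ and $(f_q)_{j,j}$ along the basis $\chi^{ji}$ of $\Ext(M_j,M_i)$, and $C_{i,j}$ collects the terms coming from the remaining off-diagonal blocks. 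The left-hand side is a coboundary, hence represents zero in $\Ext(FS_jX, FS_iX')$. Coupling this with the diagonal $(i,i)$-equations and invoking the isomorphism property of $\Theta^{ji}_{X,X'}$ together with its functoriality, I expect to conclude simultaneously that (a) each diagonal block $(f_q)_{i,i}$ is a quiver morphism $FS_iX \to FS_iX'$, hence by Lemma \ref{hom0}(1) equals $F(\varphi_i)$ for a unique $\varphi_i \in \Hom_{Q(M)}(S_iX, S_iX')$; (b) each off-diagonal block $f_{i,j}$ with $i\neq j$ lies in $\Hom_Q(FS_jX, FS_iX')$, hence vanishes by the elementarity hypothesis $\Hom(M_j,M_i)=0$; and (c) the compatibility $(\varphi_i)_{m_i}\circ X_{\chi_l^{ji}} = X'_{\chi_l^{ji}} \circ (\varphi_j)_{m_j}$ for every basis arrow $\chi_l^{ji}$ of $Q(M)$, so that the tuple $g := (\varphi_i)_i$ assembles into a genuine morphism $X \to X'$ in $\Rep(Q(M))$. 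By construction $F(g) = f$, giving fullness.

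The principal obstacle is disentangling the coupling term $C_{i,j}$ when $r \geq 3$, since the $(i,j)$-block equation involves the blocks $f_{i,k}$ and $f_{k,j}$ for $k \notin \{i,j\}$; individual blocks do not vanish one at a time in an elementary way. To handle this I would either induct on $r$, peeling off one representation $M_r$ at a time and, in the inductive step, running a Lemma \ref{hom0}(2)-style argument to eliminate the last row and column of blocks while appealing to the $r-1$ case for the remaining sub-block; or assemble the full block system into a single cohomological statement, identifying its solution space with $\bigoplus_{i}\Hom_{Q(M)}(S_iX, S_iX') \oplus \bigoplus_{i\neq j}\Hom_Q(FS_jX, FS_iX')$ and observing that the second summand is zero by elementarity. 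In either route the key ingredients are the pairwise vanishing $\Hom(M_i,M_j) = 0$ for $i \neq j$, the one-vertex case from Lemma \ref{hom0}(1), and the functoriality of the isomorphisms $\Theta^{ji}_{X,X'}$.
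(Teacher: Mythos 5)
Your proposal takes a genuinely different route from the paper's, and as written it is a plan rather than a proof. You correctly set up the block decomposition of $f$, correctly name the essential ingredients (pairwise $\Hom$-vanishing, the single-vertex case Lemma~\ref{hom0}(1), and the functoriality of the maps $\Theta^{ji}$), and you correctly observe that the $(i,j)$-entry of the commutation relation expresses $d_{FS_jX,FS_iX'}(f_{i,j})$ in terms of the remaining blocks. But you then flag the cross-terms $C_{i,j}$ as the principal obstruction and propose either an induction on $r$ or an unspecified ``global cohomological'' rearrangement to untangle them, carrying out neither. There is also a small structural slip in your schematic identity: writing $d(f_{i,j}) = \Theta^{ji}(\text{diagonal contribution}) + C_{i,j}$ already presumes that the diagonal blocks $(f_q)_{i,i}$ and $(f_q)_{j,j}$ lie in the image of $F$, which is precisely what one cannot yet assume at that stage.

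The paper never untangles the coupling at all, and this is the main point of divergence. For every pair $i\neq j$ it sets $(e_{i,j})_\rho := ((FX')_\rho)_{i,i}(f_q)_{i,j} - (f_{q'})_{i,j}((FX)_\rho)_{j,j}$; the commutation relation shows this equals a sum of cross-terms, but the left-hand side is, by its very definition, the coboundary $d_{FS_jX,FS_iX'}(f_{i,j})$, so $E(e_{i,j})\in\Ext(FS_jX,FS_iX')$ splits for every $(i,j)$ simultaneously and with no induction. The paper then appeals to the mechanism of Lemma~\ref{hom0}(2), together with $\Hom_Q(M_i,M_j)=\Hom_Q(M_j,M_i)=0$, to argue that the resulting isomorphism between the middle terms is block-diagonal, forcing $(e_{i,j})_\rho = 0$ and hence $f_{i,j}\in\Hom_Q(FS_jX,FS_iX')=0$. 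Only once all off-diagonal blocks are gone do the $(i,i)$-equations decouple, making each $f_{i,i}$ an honest $Q$-morphism to which Lemma~\ref{hom0}(1) applies. So where you anticipate a delicate disentanglement and sketch two possible workarounds, the paper's proof handles every off-diagonal block in one uniform stroke. If you want your route (i) or (ii) to go through, you would still have to supply the missing argument showing how the $\Theta$-functoriality forces the cross-terms to vanish; as stated your proposal identifies the difficulty but does not resolve it, whereas the paper's argument is organized precisely so that this difficulty does not arise.
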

\begin{proof}
As already mentioned in Remark \ref{bem} it is straightforward that $F=F_M$ is a faithful functor. Thus it remains to show that $F$ is full. To do so let $f:FX\rightarrow FX'$ be a morphism. It induces linear maps $(f_q)_{i,j}:\tilde X_{j,q}\rightarrow \tilde X'_{i,q}$ for all $q\in Q_0$ and all $1\leq i,j\leq r$ such that 
\[\sum_{k=1}^r((FX')_{\rho})_{i,k}\circ(f_{q})_{k,j}=\sum_{k=1}^r(f_{q'})_{i,k}\circ ((FX)_{\rho})_{k,j}\]
for all $\rho\in Q_1$. Thus we get
\begin{eqnarray*}
(e_{i,j})_{\rho}&:=&((FX')_{\rho})_{i,i}\circ(f_{q})_{i,j}-(f_{q'})_{i,j}\circ ((FX)_{\rho})_{j,j}\\&=&\sum_{\substack{k=1\\k\neq j}}^r(f_{q'})_{i,k}\circ ((FX)_{\rho})_{k,j}-\sum_{\substack{k=1\\k\neq i}}^r((FX')_{\rho})_{i,k}\circ(f_{q})_{k,j}\in\Hom_k((FS_jX)_q,(FS_iX')_{q'}).\end{eqnarray*}
But this means that $((e_{i,j})_{\rho})_{\rho\in Q_1}=d_{FS_jX,FS_iX'}(((f_q)_{i,j})_{q\in Q_0})$. In particular, the induced exact sequence $((e_{i,j})_{\rho})_{\rho\in Q_1}\in\Ext(FS_jX,FS_iX')$ splits. Using the second part of Lemma \ref{hom0} there exist isomorphisms $g_i\in \End(FS_iX)=\End(M_i^{\dim X_i})$ and $g_j\in\End(M_j^{\dim X'_j})$ such that $(g_j)_{q'}\circ(e_{i,j})_{\rho}\circ(g_i)^{-1}_q=0$. Indeed, the isomorphism $g$ between the middle term of the exact sequence $E(((e_{i,j})_{\rho})_{\rho\in Q_1})$ and the middle term of the exact sequence $E(0)$ is a block matrix
\[g=\begin{pmatrix}g_j&0\\0&g_i\end{pmatrix}.\]
This already means that we have $(e_{i,j})_{\rho}=0$ for every $\rho\in Q_1$. Now since $\Hom_Q(M_i,M_j)=0$ for $i\neq j$, we have $f_{i,j}=0$ for $i\neq j$. Thus it follows that $f_{i,i}\in\End(FS_iX)$. Since $F$ induces a fully faithful embedding $F|_{M_i}:\Rep(Q(M_i))\rightarrow\mathrm{add}(M_i)$ by the first part of Lemma \ref{hom0}, the claim follows.
\end{proof}
\begin{ex}\label{seqex}
Clearly reduced exceptional sequences defined in Section \ref{experp} are a special case of elementary sequences of exceptional roots. Thus Theorem \ref{thm1} generalizes Theorem \ref{perpcat} (and Corollary \ref{root}). Note that for every exceptional sequence $E$ there exists a corresponding reduced one $E'$ such that $Q(E^{\perp})=Q(E'^{\perp})$.

But we should again point out that there is another generalization: we can consider elementary sequences of Schur representations $M=(M_1,\ldots, M_r)$ rather than exceptional ones.
\end{ex}

\begin{ex}

Maybe the first non-trivial example is obtained when considering the $4$-subspace quiver, see Example \ref{subspace} for a definition. Then we can consider the two exceptional representations $M_{\alpha}$ and $M_{\beta}$ where $\alpha=(1,1,1,0,0)$ and $\beta=(1,0,0,1,1)$. Then we have 
\[Q(M_\alpha,M_\beta)=
\begin{xy}
\xymatrix{m_\alpha\ar@/^/[r]&m_\beta\ar@/^/[l]}\end{xy}
\]
A basis of $\Ext(M_{\alpha},M_{\beta})$ is for instance induced by $\chi^{\alpha\beta}_1:(M_{\alpha})_{q_1}\to (M_{\beta})_{q_0},\,1\mapsto 1$. A basis of $\Ext(M_{\beta},M_{\alpha})$ is for instance induced by $\chi^{\beta\alpha}_2:(M_{\beta})_{q_3}\to (M_{\alpha})_{q_0},\,1\mapsto 1$. Since the indecomposable representations of $Q(M_\alpha,M_\beta)$ have dimension vectors $(n,n+1),\,(n+1,n)$ and $(n,n)$ for $n\in\Nn$, in this way we can construct the real root representations of $Q$ of dimensions $(2n+1,n,n,n+1,n+1)$ and $(2n+1,n+1,n+1,n,n)$ respectively. Moreover, we can construct a $\mathbb{P}^1$-family of indecomposable representations of dimension $(2n,n,n,n,n)$. Note that this family depends on the chosen basis. Moreover, note that if $X$ is a tree module  of $Q(M_\alpha,M_\beta)$, then $FX$ is tree module of $Q$. Moreover, this gives an embedding $\Rep(\tilde{A}_2)\to\Rep(\tilde{D}_4)$ where $\tilde A_2$ and $\tilde D_4$ are considered with the mentioned orientations.
\end{ex}
\begin{ex}

The indecomposable real root representation considered in \cite[Example 4.1]{wei}, which cannot be constructed using Ringel's reflection functors, is also covered by the methods of this section: consider the $8$-subspace quiver and, moreover, the real root $\alpha=(48,1,1,1,15,15,18,18,46)$. 
We denote by $s_i$ the simple root corresponding to $q_i$. Furthermore, we consider the elementary sequence of exceptional representations $M:=(M_{\beta_1},M_{\beta_2}, M_{\beta_3},M_{\beta_4})$ induced by
\[(\beta_1,\beta_2,\beta_3,\beta_4)=(s_0+s_6+s_7+s_8,(2,1,1,1,0,0,2,2,0),s_0+s_4+s_8,s_0+s_5+s_8)^.\]
Then we have $\alpha=16\beta_1+\beta_2+15\beta_3+15\beta_4$ and $Q(M)$ is
\[
\begin{xy}\xymatrix@R50pt@C150pt{m_1\ar@/^/[r]\ar@/^/[rd]&m_3\ar@/_{20pt}/[ld]_{(2)}\\m_2\ar@/^/[u]\ar@/_{5pt}/[ru]_{(5)}\ar@/^/[r]^{(5)}&m_4\ar@/^/[l]^{(2)}
}\end{xy}
\]
where the number in parentheses indicates the number of arrows. Now the indecomposable representation of dimension $(16,1,15,15)$ yields the indecomposable of dimension $\alpha$.
\end{ex}

\subsection{Extending representations by exceptional representations}\label{s2}
The second condition on sequences of quiver representations, which we want to investigate, yields a generalization of parts of Theorem \ref{ringel}. 
The respective functors are obtained when considering the case of sequences of length two, see also Remark \ref{remrin}. In other words, in comparison to the reflection functor, we can extend a representation by more than one exceptional representation.								

For an additive category $\mathcal C$ we denote by $\mathcal{R}_{\mathcal C}$ its radical. We need the following lemma:
\begin{lem}\label{factor}
Let $X$ and $Y$ be two representations with $\Hom(X,Y)=0$. Then every short exact sequence \[\sesv{X}{Z}{Y}{\psi}{\pi}\]
induces a ring homomorphism $\Phi:\End(Z)\rightarrow\End(X)$ with $f\in\ker(\Phi)$ if and only if $f$ factors over $Y$.
\end{lem}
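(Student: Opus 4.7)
The plan is to construct $\Phi$ using the universal property of the kernel, since $\psi$ realizes $X$ as a subobject of $Z$. Given $f \in \End(Z)$, consider the composition $\pi \circ f \circ \psi : X \to Y$. By hypothesis $\Hom(X,Y)=0$, so this composition vanishes, and hence $f \circ \psi : X \to Z$ has image inside $\ker(\pi) = \mathrm{Im}(\psi)$. Since $\psi$ is a monomorphism, there exists a unique morphism $\Phi(f) : X \to X$ with $\psi \circ \Phi(f) = f \circ \psi$. This defines $\Phi$ on objects.

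Next I would check that $\Phi$ is a ring homomorphism. For the identity, $\psi \circ \mathrm{id}_X = \mathrm{id}_Z \circ \psi$, so by uniqueness $\Phi(\mathrm{id}_Z) = \mathrm{id}_X$. For composition, given $f,g \in \End(Z)$, we compute
\[
\psi \circ \Phi(f) \circ \Phi(g) = f \circ \psi \circ \Phi(g) = f \circ g \circ \psi,
\]
and uniqueness of the factorization of $(fg)\circ\psi$ through $\psi$ gives $\Phi(fg) = \Phi(f)\circ\Phi(g)$. Additivity follows in the same way from the additivity of composition on each side.

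Finally, for the characterization of the kernel: $\Phi(f) = 0$ if and only if $\psi \circ \Phi(f) = 0$ (again since $\psi$ is mono), which by the defining equation means exactly $f \circ \psi = 0$. This is equivalent to $f$ vanishing on $\mathrm{Im}(\psi) = \ker(\pi)$, and by the universal property of the cokernel this holds if and only if there exists $h : Y \to Z$ with $f = h \circ \pi$, i.e.\ $f$ factors through $Y$.

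I do not expect a real obstacle here; the only things to be slightly careful about are that uniqueness of $\Phi(f)$ genuinely uses that $\psi$ is a monomorphism (which holds as $\psi$ is the left-hand map of a short exact sequence in $\Rep(Q)$), and that factoring through $Y$ is interpreted as in the statement, namely the existence of some $h:Y\to Z$ with $f = h\pi$, rather than a factorization through an arbitrary direct sum of copies of $Y$.
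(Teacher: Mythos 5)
Your proof is correct and follows the same route as the paper: define $\Phi(f)$ via the universal property of the kernel (using $\Hom(X,Y)=0$ to kill $\pi\circ f\circ\psi$), and characterize $\ker(\Phi)$ via the universal property of the cokernel together with the injectivity of $\psi$. Your write-up is slightly more explicit about verifying the ring homomorphism axioms, but the argument is the same.
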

\begin{proof}Let $f\in\End(Z)$. Since $\Hom(X,Y)=0$, by the universal property of the kernel and cokernel respectively, $f$ induces two unique endomorphisms $g\in\End(X)$ and $h\in\End(Y)$. In particular, we get the following commutative diagram:
\[
\begin{xy}
\xymatrix{0\ar[r]&X\ar[r]^{\psi}\ar[d]^{g}&Z\ar[d]^{f}\ar[r]^{\pi}&Y\ar[d]^{h}\ar[r]&0\\
0\ar[r]&X\ar[r]^{\psi}&Z\ar[r]^{\pi}&Y\ar[r]&0
}\end{xy}
\]
This defines a ring homomorphism $\Phi:\End(Z)\rightarrow\End(X)$. If $g=0$, we have $f\circ \psi=0$. Again by the universal property of the cokernel, we obtain a morphism $\iota:Y\rightarrow Z$ such that $f=\iota\circ \pi$.

The other way around if $f=\iota\circ \pi$ for some $\iota:Y\rightarrow Z$, we obtain $\psi\circ g= f\circ \psi=\iota\circ \pi\circ \psi=0$. Since $\psi$ is injective, we obtain that $g=0$
\end{proof}

Let $M=(M_1,\ldots,M_r)$ be a sequence of indecomposable representations such that the following conditions are satisfied:
\begin{enumerate}\label{conditions}
\item The representations $M_j,\,j\geq 2$ are Schurian.
\item We have $\Hom(M_i,M_j)=\Ext(M_i,M_j)=0$ for $i<j$.
\item If $i,j\neq 1$, we have $\Hom(M_j,M_i)=0$ for $i<j$.
\end{enumerate}
This means that we allow homomorphisms $M_j\rightarrow M_1$ for $j>1$. Moreover, note that the representations $M_i$ do not have to be exceptional. Note that condition $(2)$ assures that $Q(M)$ has no oriented cycles. Now we again consider the functor $F_M$ defined in Section \ref{functorsec}.

Recall that, since $M_i$ is indecomposable, we have that $f\in\Hom(M_i,M_i)$ is in the radical if and only if it is no isomorphism. 
For a representation $X$ of $Q(M)$ we denote by $X_2$ the corresponding representation of the full subquiver $Q(M)\backslash\{m_1\}$. 

Recall that we have a natural isomorphism
$$\Theta^1_X:=(\Theta^{i,1}_{X,X})_{i=2,\ldots,r}:\bigoplus_{i=2}^r\bigoplus_{l=1}^{n_{i1}}\Hom_k(S_iX,S_1X)\cong\bigoplus_{i=2}^r\Ext(M_i^{\dim X_{m_i}},M_1^{\dim X_{m_1}}).$$

With these properties in hand we get the following result:
\begin{thm}\label{indecompconstr}
Let $M$ be a sequence of indecomposable representations satisfying the conditions $(1)-(3)$. If $X$ is a representation of $Q(M)$ such that $\dim X_{m_1}=1$ and such that $\Theta^1_X$ induces an isomorphism 
$$\bigoplus_{i=2}^r\Ext(M_i^{\dim X_{m_i}},M_1)\cong\Ext(FX_2,M_1),$$
 we have that $F_MX$ is indecomposable whenever $X$ is indecomposable. 
\end{thm}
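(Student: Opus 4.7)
The plan is to show that $\End(F_MX)$ admits only trivial idempotents. Note first that $\dim X_{m_1}=1$ combined with condition (2), which guarantees $Q(M)$ has no arrows leaving $m_1$, yields a natural short exact sequence
$$\sesv{M_1}{F_MX}{F_MX_2}{\psi}{\pi}$$
where $X_2$ denotes the restriction of $X$ to $Q(M)\setminus\{m_1\}$ and the extension class is the image of $(X_{\chi^{i1}_l})_{i,l}$ under $\Theta^1_X$. By a filtration argument using $\Hom(M_1,M_j)=0$ for $j\geq 2$, condition (2) also gives $\Hom(M_1,F_MX_2)=0$. The case $X_2=0$ gives $F_MX=M_1$, which is indecomposable, so I may assume $X_2\neq 0$; then the indecomposability of $X$ together with the isomorphism hypothesis forces the sequence above to be non-split, since otherwise the vanishing of the extension class would force every $X_{\chi^{i1}_l}$ to vanish, yielding the splitting $X=S_1X\oplus X_2$.

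By Lemma \ref{factor}, I obtain a ring homomorphism $\Phi\colon\End(F_MX)\to\End(M_1)$ whose codomain is local (since $M_1$ is indecomposable); the vanishing of $\Hom(M_1,F_MX_2)$ also lets each $f\in\End(F_MX)$ descend via $\pi$ to a unique $h=\Psi(f)\in\End(F_MX_2)$. Let $f$ be an idempotent; after possibly replacing $f$ by $\mathrm{id}-f$, I may assume $\Phi(f)=0$, so $f=\iota\circ\pi$ for some $\iota\colon F_MX_2\to F_MX$ and $h=\pi\circ\iota$ satisfies $\iota\circ h=\iota$. Conditions (1)--(3) ensure that $(M_2,\ldots,M_r)$ is an elementary sequence of Schur representations, so Theorem \ref{thm1} makes the restricted functor a fully faithful embedding; in particular $h$ descends to an idempotent on $X_2$ inducing a decomposition $X_2=X_A\oplus X_B$ with $\mathrm{im}(h)=FX_A$ and $\ker(h)=FX_B$. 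The identity $\iota\circ h=\iota$ gives $\iota|_{FX_B}=0$ and $\pi\circ\iota|_{FX_A}=\mathrm{id}_{FX_A}$, so the extension pulled back along $FX_A\hookrightarrow F_MX_2$ splits.

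To finish, I would use the functoriality of $\Theta$ with respect to the decomposition $X_2=X_A\oplus X_B$ to conclude that the isomorphism hypothesis restricts to an isomorphism $\bigoplus_{i\geq 2,l}\Hom_k(X_{A,m_i},X_{m_1})\cong\Ext(FX_A,M_1)$. The splitting of the pullback extension then forces $X_{\chi^{i1}_l}|_{X_{A,m_i}}=0$ for all $i$ and $l$; since $X_A$ is already a summand of $X_2$ and $Q(M)$ has no arrows out of $m_1$, this produces a decomposition $X=X_A\oplus X'$ in which $X'$ carries the vertex space $X_{m_1}$. Indecomposability of $X$ and $\dim X_{m_1}=1$ then force $X_A=0$, hence $h=0$. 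With $h=\pi\circ\iota=0$, the map $\iota$ factors as $\psi\circ\alpha$ for some $\alpha\colon F_MX_2\to M_1$, so $f=\psi\circ\alpha\circ\pi$ and $f^2=\psi\circ\alpha\circ(\pi\circ\psi)\circ\alpha\circ\pi=0$; together with $f^2=f$ this gives $f=0$. The main obstacle is verifying the compatibility of the isomorphism hypothesis with the decomposition of $X_2$, that is, checking that $\Theta^{i1}$ respects direct sum decompositions of $X_{m_i}$, a routine but essential verification.
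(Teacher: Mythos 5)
Your proof is correct and uses the same basic ingredients as the paper — Lemma~\ref{factor} to obtain the ring map $\Phi\colon\End(F_MX)\to\End(M_1)$, the vanishing $\Hom(M_1,F_MX_2)=0$ to put endomorphisms into triangular form, the full faithfulness of $F$ restricted to $(M_2,\ldots,M_r)$ from Theorem~\ref{thm1}, and the functoriality of $\Theta^1_X$ together with the isomorphism hypothesis — but the closing argument is organized differently. The paper shows that $\ker(\Phi)$ consists of nilpotent endomorphisms (by showing, for $f\in\ker\Phi$, that the induced $g_{22}\in\End(X_2)$ extends to an endomorphism of $X$ that vanishes at $m_1$, hence is nilpotent by indecomposability of $X$), and then concludes that $\End(F_MX)$ is local because it embeds, modulo a nil ideal, into the local ring $\End(M_1)$. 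You instead take an idempotent $f$, reduce to $\Phi(f)=0$, transport the induced idempotent $h\in\End(F_MX_2)$ back to an idempotent on $X_2$ via the full embedding, use the resulting decomposition $X_2=X_A\oplus X_B$ together with the pullback of the extension class to $FX_A$ to force $X_{\chi^{i1}_l}|_{X_{A,m_i}}=0$, and then derive a decomposition $X=X_A\oplus X'$ contradicting indecomposability unless $X_A=0$, whence $h=0$ and $f=0$. The two arguments are essentially equivalent (for a finite-dimensional module, $\End$ local is the same as having no nontrivial idempotents), but the paper's nilpotency argument is a bit more uniform, whereas your idempotent/decomposition approach makes the appeal to indecomposability of $X$ more geometric. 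The ``routine but essential verification'' you flag — that the isomorphism $\bigoplus_{i,l}\Hom_k(X_{m_i},X_{m_1})\cong\Ext(FX_2,M_1)$ is diagonal with respect to $X_2=X_A\oplus X_B$ — does go through: the cocycle-level map $(\phi_{i,l})\mapsto(\sum_{i,l}(\chi^{i1}_l)_\rho\otimes\phi_{i,l})_\rho$ and the differential $d_{FX_2,M_1}$ both visibly respect the block decomposition of $FX_2$, so the induced isomorphism on $\Ext$ splits as a direct sum and the $A$-block is again an isomorphism. One small remark: your preliminary observation that the short exact sequence is non-split is not actually needed — the case where it splits is handled automatically by your argument (it produces $X_A=X_2$ and then the contradiction with indecomposability of $X$).
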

\begin{proof}
Let $X$ be a representation of $Q(M)$. For $F=F_M$ there exists a short exact sequence
\[\sesv{FX_{m_1}}{FX}{FX_2}{\psi}{\pi}.\] Note that $X_{m_1}$ is the simple representation $S_{q_1}$ and we have $FX_{m_1}=M_1$. Since we have $\Hom_Q(M_1,FX_2)=0$, every morphism $f\in\End(FX)$ is of the form
\[f=\begin{pmatrix}f_{11}&f_{12}\\0&f_{22}\end{pmatrix}\]
such that we have $f_{11}\in\Hom_Q(FX_{m_1},FX_{m_1})$, $f_{22}\in\Hom_Q(FX_2,FX_2)$ and, moreover, $f_{12}\in\bigoplus_{q\in Q_0}\Hom_k((FX_2)_q,(FX_{m_1})_q)$.
In particular, we get a commutative diagram
\[
\begin{xy}
\xymatrix{0\ar[r]&FX_{m_1}\ar[r]^{\psi}\ar[d]^{f_{11}}&FX\ar[d]^{f}\ar[r]^{\pi}&FX_2\ar[d]^{f_{22}}\ar[r]&0\\
0\ar[r]&FX_{m_1}\ar[r]^{\psi}&FX\ar[r]^{\pi}&FX_2\ar[r]&0
}\end{xy}
\]
inducing a ring homomorphism $\Phi:\End(FX)\rightarrow\End(FX_{m_1})=\End(M_1)$ by Lemma \ref{factor}. Since $(M_2,\ldots,M_r)$ is an elementary sequence of Schur representations, by Theorem \ref{thm1} the functor $F$ restricted to this sequence is a fully faithful embedding. In particular, there exists a morphism $g_{22}\in\Hom_{Q(M)}(X_2,X_2)$ such that $f_{22}=F(g_{22})$. Moreover, there exists a morphism $g_{11}\in\Hom_{Q(M)}(X_{m_1},X_{m_1})$ and a morphism $\tilde{f}_{11}\in\mathcal{R}_{\mathrm{add}(M_1)}$ such that $f_{11}=F(g_{11})+\tilde{f}_{11}$. Since $f$ is a morphism, we obtain
\begin{eqnarray*}
(d_{FX_2,FX_{m_1}}(f_{12}))_{\rho}&=&(f_{12})_{q'}\circ (FX_2)_{\rho}-(FX_{m_1})_{\rho}\circ (f_{12})_q\\&=&((FX)_{\rho})_{12}\circ (f_{22})_q-(f_{11})_{q'}\circ ((FX_2)_{\rho})_{12}\\
&=&((FX)_{\rho})_{12}\circ (F(g_{22}))_q-(F(g_{11})+\tilde{f}_{11})_{q'}\circ ((FX_2)_{\rho})_{12}
\end{eqnarray*}
for every $\rho:q\rightarrow q'$. If $f_{11}=0$, i.e. $f\in\ker(\Phi)$, we have $g_{11}=0=\tilde{f}_{11}$. Similar to the case considered in Lemma \ref{hom0} and Theorem \ref{thm1} respectively, we can make use of the functoriality of $\Theta^1_X$. Combining it with the fact that $\Theta^1_X$ induces the mentioned isomorphism this first yields 
\[((FX)_{\rho})_{12}\circ (F(g_{22}))_q=0\]
and then $X_{\chi^{i1}_l}\circ (g_{22})_{m_i}=0=(g_{22})_{m_1}\circ X_{\chi^{i1}_l}$ for all $\chi^{i1}_l:m_i\to m_1\in Q(M)_1$ where $i\in\{2,\ldots,r\}$ and $l\in\{1,\ldots,n_{i1}\}$. Note that $(g_{22})_{m_1}=0$ because $g_{22}\in\Hom_{Q(M)}(X_2,X_2)$. In particular, $g_{22}$ extends from a morphism of $X_2$ to a morphism of $X$. 
Since $X$ is indecomposable, every endomorphism of $X$ is either an isomorphism or nilpotent, see \cite[Corollary I.4.8]{ass}. Thus $g_{22}$ is nilpotent because it is not an isomorphism. Thus $F(g_{22})=f_{22}$ is nilpotent and, therefore, also $f$ is nilpotent. In summary we get an embedding 
\[R:=\End(FX)/\ker(\Phi)\hookrightarrow\End(M_1)\]
where $\End(M_1)$ is local. Thus $R$ is local because it is a subring of a local ring. Since $\ker(\Phi)$ only consists of nilpotent elements, $\End(FX)$ is already local. Indeed, obviously every idempotent of $\End(FX)$ is either nilpotent or a unit.
\end{proof}

\begin{rem}\label{theta}
Note that for the implication that $g_{22}$ is a morphism of $X$ (and not only of $X_2$), it is important that $\Theta^1_X$ induces the mentioned isomorphism. This is because otherwise we have 
\[\dim \Ext(FX_2,M_1)<\dim\Ext(\bigoplus_{i=2}^rM_i^{\dim X_{m_i}},M_1)\]
which means that $FX$ might be decomposable.

The condition that $\Theta^1_X$ induces an isomorphism is for instance satisfied if $\Hom(M_i,M_1)=0$ for all $i=2,\ldots,r$ or if we have a decomposition of $\{2,\ldots,r\}$ into disjoint sets $I_1$ and $I_2$ such that $\Hom(M_{i},M_1)=0$ for $i\in I_2$ and $\Ext(M_i,M_j)=0$ for $i\in I_1$ or $j\in I_1$, $i\neq j$ and $i,j\geq 2$. This is clearly satisfied if $r=2$ or if $\Ext(M_i,M_j)=0$ for $i\neq j$ and $i,j\geq 2$.
\end{rem}

\begin{rem}

We obtain an analogous result if we consider sequences $M=(M_1,\ldots,M_r)$ of indecomposable representations such that 
\begin{enumerate}
\item The representations $M_j,\,j\leq r-1$, are Schurian.
\item We have $\Hom(M_i,M_j)=\Ext(M_i,M_j)=0$ for $i<j$.
\item If $i,j\neq r$, we have $\Hom(M_j,M_i)=0$ for $i<j$.
\end{enumerate} 
Thus we allow homomorphisms $M_r\to M_i$ for $i\in\{1,\ldots,r-1\}$. The proof in this case is obtained by a slight modification of the arguments or when considering the opposite quiver obtained when turning around all arrows.
\end{rem}

\begin{rem}\label{remrin}

  Let $X$ be a representation of a quiver $Q$ and $S$ an exceptional representation such that $\Hom(X,S)=0$, $\Hom(S,X)=0$, $\dim\Ext(X,S)=n_1$ and $\dim\Ext(S,X)=n_2$. Then every indecomposable representation of
\[
\begin{xy}
\xymatrix{n_2\ar@/^{0.3cm}/[rr]^{\rho_1}\ar@/_{2mm}/[rr]_{\rho_{n_2}}&\vdots&1
}\end{xy}
\]
with dimension vector as indicated yields an indecomposable representation of dimension  $\underline{\dim} X+n_2\cdot\underline{\dim}S$. In particular, we can construct the indecomposable representation obtained in Theorem \ref{ringel} in this way, i.e. the middle term of the exact sequence
\[\ses{X}{X^S}{\bigoplus_{i=1}^{n_2} S}.\]
Now we have $\Ext(S,X^S)=\Hom(S,X^S)=0$ and since we have $\Hom(X,S)\cong\Ext(X^S,S)$, every indecomposable representation of the quiver 
\[
\begin{xy}
\xymatrix{1\ar@/^{0.3cm}/[rr]^{\rho_1}\ar@/_{2mm}/[rr]_{\rho_{n_1}}&\vdots&n_1
}\end{xy}
\]
gives an indecomposable representation of $Q$ of dimension $\underline{\dim} X^S+n_1\cdot\underline{\dim}S$. In particular, we can construct the representation $X^S_S$ obtained by the reflection functor. 
\end{rem}

Let us consider two examples:
\begin{ex}\label{ex2}
 
First consider the quiver $Q$ given by
\[
\begin{xy}
\xymatrix{&&q_0\\q_1\ar[rru]&q_2\ar[ru]&q_3\ar[u]&q_4\ar[lu]&q_5\ar[llu]\\&q_6\ar[u]\\q_7\ar[ru]&&q_8\ar@/^{0.2cm}/[lu]\ar@/_{0.2cm}/[lu]}\end{xy}
\]
Moreover, we consider the imaginary root $\alpha_1=(3,1,1,1,1,1,0,0,0)$ and the exceptional roots $\alpha_2=(1,0,0,0,1,1,0,0,0)$, $\alpha_3=(0,0,0,0,0,0,1,1,0)$, $\alpha_4=(0,0,0,0,0,0,1,0,2)$. Choosing an indecomposable representation $M_{\alpha_1}\in{^\perp} M_{\alpha_2}$ such that $\dim\Hom(M_{\alpha_2},M_{\alpha_1})=1$ and defining $M=(M_{\alpha_1},M_{\alpha_2},M_{\alpha_3},M_{\alpha_4})$ the quiver $Q(M)$ is given by 
\[
\begin{xy}
\xymatrix@C30pt{&m_1\\m_2\ar@/^{0.2cm}/[ru]\ar@/_{0.2cm}/[ru]&m_3\ar[u]&m_4\ar[lu]\ar[l]\ar@/^{0.2cm}/[l]\ar@/_{0.2cm}/[l]&}\end{xy}
\]
Note that, by Remark $\ref{theta}$ the conditions needed to apply Theorem $\ref{indecompconstr}$ are satisfied. In a sense this example is an application of Ringel's reflection functor and Schofield induction at the same time.
\end{ex}
\begin{ex}
As a second example let us consider the quiver
\[
\begin{xy}
\xymatrix{q_0&q_2\ar[l]&q_4\ar@/^{0.2cm}/[l]\ar@/_{0.2cm}/[l]\\q_1\ar@/^{0.2cm}/[u]\ar@/_{0.2cm}/[u]&q_3\ar[l]\ar[u]}\end{xy}
\]
and the root $\alpha_1=(2,2,0,0,0)$ and the exceptional roots $\alpha_2=(0,0,2,0,1)$ and $\alpha_3=(0,0,0,1,0)$. Note that $\alpha_1$ corresponds to an imaginary non-Schur root of the Kronecker quiver. Choosing three indecomposable representations of the respective dimensions we obtain the quiver $Q(M)$ given by
\[
\begin{xy}
\xymatrix@C40pt@R40pt{m_1&m_2\ar@/^{0.1cm}/[l]\ar@/_{0.1cm}/[l]\ar@/^{0.3cm}/[l]\ar@/_{0.3cm}/[l]\\&m_3\ar@/^{0.1cm}/[lu]\ar@/_{0.1cm}/[lu]\ar@/^{0.1cm}/[u]\ar@/_{0.1cm}/[u]}\end{xy}
\]
In both cases every root $\beta$ with $\beta_{m_1}=1$ and an indecomposable representation of this dimension gives rise to an indecomposable representation of $Q$.
\end{ex}

\section{A remark on the decomposition of roots}
\noindent\setcounter{section}{1}\setcounter{thm}{1}We fix a quiver $Q$ with vertices $\{1,\ldots,n\}$.  The following question was posed in the introduction:
\begin{que}
For which (non-Schurian) roots of a fixed quiver $Q$ there exists at least one indecomposable representation which can be constructed by Theorems \ref{thm1} and \ref{indecompconstr} (which include Ringel's reflection functor and Schofield induction)?
\end{que}
\setcounter{section}{4}\setcounter{thm}{0}
This question is closely related to the question of decomposing a root into smaller ones which is dealt with in this section. For simplicity we restrict to the case where the smaller roots are exceptional, i.e. we deal with the following question:

\begin{que} Fix a root $\alpha$ of a (tree-shaped) quiver. Does there exist a non-trivial reduced exceptional sequence (resp. elementary sequence of exceptional roots)  such that $\alpha$  is a positive linear combination of the roots contained in the sequence?
\end{que} 
In this question, by trivial decomposition we mean the one induced by the trivial exceptional sequence $(S_1,\ldots, S_n)$, i.e. $\alpha=\sum_{i=1}^n\alpha_i\cdot\underline{\dim} S_i$.  By Corollary \ref{root} the existence of such decompositions for tree-shaped quivers together with Proposition \ref{dec} would imply the existence of indecomposable tree modules for every root. Indeed, we could start an induction on the set of roots of all quivers. Note that for the generalized Kronecker quiver we only have the trivial decomposition. But even for a quiver with three vertices it is not clear which roots have such a decomposition. 

An algorithm to obtain a possibly trivial reduced exceptional sequence for a root, which is not a Schur root, is given as follows (recall that Schur roots are covered by \cite[Section 3]{wei}): we start with the canonical decomposition of $\alpha$, say $\alpha=\bigoplus_{i=1}^n\alpha_i^{k_i}$, see \cite{sch} and \cite[Section 4]{dw} for an algorithm to compute it and some additional properties. Then either $\alpha_1$ or $\alpha_n$ is exceptional. We also know that $\beta:=\sum_{i=2}^n\alpha_i^{k_i}\in\alpha_1^{\perp}$. Thus if $\alpha_1$ is exceptional, there exists a decomposition $\beta=\sum_{i=1}^t\beta_i^{l_i}$ into exceptional roots $\beta_1,\ldots,\beta_t$ such that $(\beta_1,\ldots,\beta_t)$ is a reduced exceptional sequence. Such a decomposition exists because we can always take the simple roots in $\alpha_1^{\perp}$. In particular, $(\alpha_1,\beta_1,\ldots,\beta_t)$ is an exceptional sequence. Note that we have $\ext(\beta,\alpha_1)=0$ because we started with the canonical decomposition. If there exists a $\beta_i$ such that $\hom(\beta_i,\alpha_1)\neq 0$, i.e. the sequence is not satisfying the requested properties, we can proceed in the same manner and decompose $\gamma:=\alpha_1^{k_1}+\sum_{i=1}^{t-1}\beta_{i}^{l_i}$ into exceptional roots of $^\perp\beta_t$. We proceed in the same manner until we obtain a reduced exceptional sequence. This algorithm terminates because the roots are getting smaller in every step. Thus we are forced to end up with the trivial exceptional sequence if we do not obtain a reduced exceptional sequence before.

Unfortunately, it seems that there does not always exist a non-trivial decomposition as Example \ref{ex1} suggests. At least the algorithm from above is not applicable to obtain a non-trivial sequence in that case. The next question would be if there is a decomposition induced by an elementary sequence of exceptional roots for every root. But this seems to be an even harder problem.

\begin{ex}\label{ex1}

Let $Q$ be the $5$-subspace quiver. Then the canonical decomposition of the isotropic root $\alpha=(10,3,3,3,3,8)$ is given by
\[\alpha=(6,2,2,2,2,4)\oplus(1,1,0,0,0,1)\oplus\ldots\oplus(1,0,0,0,1,1).\]
In the first step we get an exceptional sequence
\[((3,1,1,1,1,0),S_5,(1,1,0,0,0,1),\ldots,(1,0,0,0,1,1)).\] 
In the next step we obtain the sequence
\[((3,1,1,1,1,0),(1,1,0,0,0,0),\ldots,(1,0,0,0,1,0),S_5).\] 
Now it easy to check that we end up with the trivial decomposition. 

But there exists a decomposition induced by an elementary sequence of exceptional roots which is
\[\alpha=(1,1,1,0,0,0)+(1,0,0,1,1,0)+2\cdot(1,1,0,0,0,1)+\ldots+2\cdot(1,0,0,0,1,1).\]
Thus we can construct indecomposables of dimension $\alpha$ when constructing indecomposables of the quiver
\[
\begin{xy}\xymatrix{&2\ar@/^/[ld]\\1\ar@/^/[ru]\ar@/^/[d]\ar@/^/[r]&2\ar@/^/[l]\\1\ar@/^/[rd]\ar@/^/[u]\ar@/^/[r]&2\ar@/^/[l]\\&2\ar@/^/[lu]
}\end{xy}
\]
with dimension vector as indicated.

We should mention that we can also construct it by successively applying Ringel's reflection functor and thus with the functor of Section \ref{s2}.
\end{ex}
\begin{ex}

But there are plenty of (even non-Schurian) roots for which the algorithm gives a non-trivial decomposition. For instance if $\alpha=(3,2,2,1,1)$ and $Q$ is the $4$-subspace quiver, starting with the canonical decomposition $(2,1,1,1,1)\oplus(1,1,1,0,0)$ the algorithm terminates with the non-trivial reduced exceptional sequence
\[((1,0,1,0,0),(1,0,0,1,1),S_1).\]
\end{ex}
\section{Taking self-extensions into account }\label{5}
\noindent 
It is a natural to ask whether it is possible to take self-extensions of non-exceptional representations into account when considering the functor of Section \ref{functorsec}. This should translate to adding $n$ loops to a vertex which corresponds to a Schur representation $M$ with $\dim\Ext(M,M)=n$. Actually, we will see that there is a natural way to generalize the functor. But we will state a counterexample which shows that this functor does not preserve indecomposability even in the case of a quiver with only one vertex and loops. In particular, the functor is not full.

By $L(n)$ we denote the quiver having one vertex denoted by $m$ and $n$ loops. Let $M$ be a representation of $Q$ with $\End(M)=k$ and $\dim\Ext(M,M)=n$. Fix a tree-shaped basis $\{b_1,\ldots,b_n\}$ of $\Ext(M,M)$. Let $X=(X_m,(X_l)_{l=1,\ldots,n})$ be a representation of  $L(n)$ and define $\tilde X_{q}:=M_q\otimes_k X_{m}$ for $q\in Q_0$. Define a representation $FX$ of $Q$ by the vector spaces
\[(FX)_q= \tilde X_{q}\text{ for all }q\in Q_0\]
and for $\rho:q\rightarrow q'$ we define linear maps $(FX)_{\rho}=\tilde X_q\to \tilde X_{q'}$ by
\[(FX)_{\rho}=M_{\rho}\otimes_k\mathrm{id}_{X_{m}}+\sum_{l=1}^{n}(b_l)_{\rho}\otimes_kX_l:\tilde X_{q}\rightarrow \tilde X_{q'}.\]

Let $f:X\rightarrow X'$ be a morphism. Then we define $Ff:FX\rightarrow FX'$ by
\[(Ff)_q:=\mathrm{id}_{M_q}\otimes_kf_{m}:\tilde X_{q}\rightarrow  \tilde X'_{q'}.\]
Note that it is easy to check that $F$ is faithful. Moreover, it indeed defines a functor because for every morphism $f:X\to X'$, we have
\begin{eqnarray*}
(FX')_{\rho}\circ (Ff)_{q}&=&(M_{\rho}\otimes\mathrm{id}_{X'_m}+\sum_{l=1}^n(b_l)_{\rho}\otimes X'_l)\circ\mathrm{id}_{M_q}\otimes f_m\\
&=&M_{\rho}\otimes f_m+\sum_{l=1}^n(b_l)_{\rho}\otimes X'_l\circ f_m\\
&=&M_{\rho}\otimes f_m+\sum_{l=1}^n(b_l)_{\rho}\otimes f_m\circ X_l\\
&=&\mathrm{id}_{M_{q'}}\otimes f_m\circ(M_{\rho}\otimes\mathrm{id}_{X_m}+\sum_{l=1}^n(b_l)_{\rho}\otimes X_l)\\&=&(Ff)_{q'}\circ (FX)_{\rho}.
\end{eqnarray*}
Now it would suffice to have that $f$ is also full in order to have that $F$ preserves indecomposability. But the following example shows that in general this is not the case, even for Schurian representation (resp. for stable representations which we do not define in this paper). Consider the generalized Kronecker quiver $K(3)$ with two vertices $\{q,q'\}$ and three arrows $\{a,b,c\}$ from $q$ to $q'$. Moreover, let $M$ be the representation of dimension $(2,3)$ defined by the matrices
\[M_a=\begin{pmatrix}0&0\\0&0\\0&1\end{pmatrix},\,M_b=\begin{pmatrix}1&0\\0&1\\0&0\end{pmatrix},\,M_c=\begin{pmatrix}0&0\\1&0\\0&0\end{pmatrix}.\]
The corresponding coefficient quiver is given by
\[
\begin{xy}
\xymatrix@R5pt@C30pt{&q'_3\\q_1\ar[ru]^b\ar[rd]^c&\\&q'_4\\q_2\ar[ru]^b\ar[rd]^a&\\&q'_5}
\end{xy}\]
where the indices of the arrows name the original arrows. 

Now it is easy to check that $M$ is even stable with respect to the standard stability defined by the linear form $\Theta=(1,0):\Zn K(3)_0\to\Zn$, where we refer to \cite{kin} for more details on stable representations. In particular, $M$ is Schurian. Clearly,
every arrow of the set $\{\rho:s\to t\mid \rho\in K(m)_1,\,s\in\{q_1,q_2\},\,t\in\{q'_1,q'_2,q'_3\}\}$ defines an element of $\bigoplus_{\rho:q\rightarrow q'\in Q_1}\Hom_k(M_q,M_{q'})$ in the natural way. Using this notation, a tree-shaped bases of $\Ext(M,M)$ is given by the arrows
\[q_1\xlongrightarrow{a}q'_3,\,\,q_1\xlongrightarrow{a}q'_4,\,\,q_2\xlongrightarrow{b}q'_3,\,\,q_2\xlongrightarrow{b}q'_5,\,\,q_2\xlongrightarrow{c}q'_3,\,\,q_2\xlongrightarrow{c}q'_5\]
Thus the representation $M'$ defined by the matrices
\[M'_a=\begin{pmatrix}1&0\\1&0\\0&1\end{pmatrix},\,M'_b=\begin{pmatrix}1&0\\0&1\\0&1\end{pmatrix},\,M'_c=\begin{pmatrix}0&1\\1&0\\0&1\end{pmatrix}\]
can be constructed using the functor from above. But since 
\[g_q=\begin{pmatrix}0&1\\0&1\end{pmatrix},\,\,g_{q'}=\begin{pmatrix}0&0&1\\0&0&1\\0&0&1\end{pmatrix}\]
is a non-trivial idempotent of $\End(M')$, the representation $M'$ is not indecomposable.


\begin{thebibliography}{10}
\bibitem{ass}
Assem, I., Simson, D., Skowronski, A.: Elements of the Representation Theory of Associative Algebras {\bf 1}. Cambridge University Press, Cambridge 2007.  
\bibitem{dw} Derksen, H., Weyman, J.: On the canonical decomposition of quiver representations. Compositio Mathematica {\bf 133}, no. 3, 245-265 (2002).
\bibitem{dw2} Derksen, H., Weyman, J.: The combinatorics of quiver representations. Annals de l'Institut Fourier {\bf 61}, no. 3, 1061-1131 (2011).
\bibitem{gab} Gabriel, P.: The universal cover of a finite-dimensional algebra. Representations of algebras. Lecture Notes in Mathematics {\bf 903}, 68-105 (1981).
\bibitem{hr} Happel, D., Ringel, C.M.: Tilted Algebras. Transactions of the American Mathematical Society {\bf 274}, no.2, 399-443 (1982).
\bibitem{kac} Kac, V.G.: Infinite root systems, representations of graphs and invariant theory. Inventiones mathematicae {\bf 56}, no. 1, 57-92 (1980).
\bibitem{kin}
King, A.: Moduli of representations of finite-dimensional algebras. Quart. J. Math. Oxford Ser. {\bf 45}, 515-530 (1994).
\bibitem{pet} Peternell, W.: Über erreichbare und baumartige unzerlegbare Darstellungen von Köchern. Diploma thesis, Bergische Universität Wuppertal, 2012. arXiv:1401.0721.

\bibitem{rin2} Ringel, C.M.: Representations of $K$-species and bimodules. Journal of Algebra {\bf 41}, no. 2, 269-302 (1976).
\bibitem{rin} Ringel, C.M.: Reflection functors for hereditary algebras. Journal of the London Mathematical Society (2) {\bf 21}, no. 3, 465-479 (1980).
\bibitem{rin1} Ringel, C.M.: Exceptional modules are tree modules. Linear algebra and its Applications {\bf 275/276}, 471-493 (1998).
\bibitem{rin3}
Ringel, C.M.: Combinatorial Representation Theory. History and future. Representations of Algebras {\bf 1}. Proceedings of the Conference on ICRA IX, Beijing 2000, Beijing Normal University Press, 122-144 (2002).

\bibitem{sch} Schofield, A.: General representations of quivers. Proceedings of the London Mathematical Society (3) {\bf 65}, no.1, 46-64 (1992).
\bibitem{sc2} Schofield, A.: Semi-invariants of quivers. Journal of the London Mathematical Society (2) {\bf 43}, no. 3, 383-395 (1991).
\bibitem{wei} Weist, T.: Tree modules. Bulletin of the London Mathematical Society {\bf 44}, no. 5, 882-898 (2012).
\bibitem{wie} Wiedemann, M.: Quiver representations of maximal rank type and an application to representations of a quiver with three vertices. Bulletin of the London Mathematical Society {\bf 40}, no. 3, 479-492 (2008).
\end{thebibliography}
\end{document}